\documentclass[oneside,english]{amsart}
\usepackage[T1]{fontenc}
\usepackage[latin9]{inputenc}
\usepackage{geometry}
\geometry{verbose}
\usepackage{amstext}
\usepackage{amsthm}
\usepackage{amssymb}
\usepackage{graphicx}
\usepackage{esint}
\usepackage{comment}
\usepackage{ulem}

\makeatletter
%%%%%%%%%%%%%%%%%%%%%%%%%%%%%% Textclass specific LaTeX commands
\numberwithin{equation}{section}
\numberwithin{figure}{section}
\theoremstyle{plain}
\newtheorem{thm}{\protect\theoremname}
\theoremstyle{plain}
\newtheorem{prop}[thm]{\protect\propositionname}
\theoremstyle{plain}
\newtheorem{lem}[thm]{\protect\lemmaname}
\newtheorem{cor}[thm]{\protect\corollaryname}
\theoremstyle{remark}
\newtheorem{rem}[thm]{\protect\remarkname}
\theoremstyle{remark}
\newtheorem{ex}[thm]{\protect\examplename}
\newtheorem{conjecture}[thm]{\protect\conjecturename}

%%%%%%%%%%%%%%%%%%%%%%%%%%%%%% User specified LaTeX commands.

\usepackage{todonotes}
\global\long\def\Re{\operatorname{Re}}

\global\long\def\Im{\operatorname{Im}}

\global\long\def\Arg{\operatorname{Arg}}

\global\long\def\Log{\operatorname{Log}}

\global\long\def\Res{\operatorname{Res}}

\makeatother

\usepackage{babel}
\providecommand{\lemmaname}{Lemma}
\providecommand{\corollaryname}{Corollary}
\providecommand{\propositionname}{Proposition}
\providecommand{\remarkname}{Remark}
\providecommand{\theoremname}{Theorem}
\providecommand{\examplename}{Example}
\providecommand{\conjecturename}{Conjecture}
\providecommand{\problemname}{Problem}

\usepackage{epsfig}
\usepackage[usenames,dvipsnames]{pstricks}
\usepackage{pst-grad}
\usepackage{pst-plot}

\def\eref#1{(\ref{#1})}

\def\sinh{{\rm sinh}}

\begin{document}
\title{Sheffer sequences with zeros on a line}
\author{G.-S. Cheon${}^1$}
\address{${}^1$Department of Mathematics/ Applied Algebra and Optimization Research Center, Sungkyunkwan University, Suwon 16419, Rep. of Korea}
\email{gscheon@skku.edu}
\thanks{ G.-S. Cheon was partially supported by the National Research Foundation of Korea (NRF) grant funded by the Korean government (MSIT) (RS-2025-00573047 and 2019R1A2C1007518).}
\author{T. Forg\'acs${}^{\dag, 2}$}

\address{${}^2$Department of Mathematics, California State University, Fresno, Fresno, CA 93740-8001, USA}
\email{tforgacs@mail.fresnostate.edu}
\email{khangt@mail.fresnostate.edu}
\author{K. Tran${}^2$}

\maketitle
\begin{abstract}  We extend a result of Bump et al. to show that a large family of Sheffer sequences has their zeros - up to perhaps a finite number of exceptions - on a vertical line. We connect a particular such sequence to the Riemann zeta function via a product representation of a scaled Mellin transform, analogously to the product decomposition of a Mellin transform involving the generalized Laguerre polynomials into factors of the Gamma function and Meixner polynomials. \\

\noindent MSC: 05A15, 05A40, 30C15, 30E15\\

\noindent Key words: Sheffer sequence, Mellin transform, zeta function, zero locus, limiting distribution
\end{abstract}

\section{Introduction}
In recent works (\cite{cft}, \cite{cfmt} and \cite{cfkt}) the authors studied the zero locus of various Sheffer sequences, and investigated combinatorial properties of the coefficients of such polynomial sequences. In the most recent of these works we have shown that the zeros of a certain family of Sheffer sequences, as well as those of their cognate sequences lie on a line. The current work adds to the understanding of polynomial sequences with zeros on a line. In particular, we extend a result of Bump et al. (\cite{BBump1}) which shows that the zeros of the sequence of polynomials generated by 
\[
(1-z)^{s-1-\alpha/2}(1+z)^{-2-\alpha/2}
\]
 lie on the line $\Re s=1/2$. We show (c.f. Theorem \ref{thm:pn_generalization}) that for any choice of $p, p^*$, the zeros of the polynomials generated by 
 \[
 (1-\alpha_{0}z)^{p+s}(1+\alpha_{0}z)^{p^{*}-s}\prod_{i=1}^{N}(1-\alpha_{i}^{2}z^{2})^{p_{i}}
 \]
  lie on the line $\Re s = \frac{p^*-p}{2}$. In additon to this extension theorem, we also establish a connection between a sequence of polynomials like those addressed Theorem \ref{thm:pn_generalization}, a scaled Mellin transform of a family of functions related to the generealized Laguerre polynomials, and the Riemann zeta function (see Theorem \ref{thm:mellinzeta}). Through this connection, understanding the zeros of the scaled Mellin transform becomes equivalent of understanding the zeros of the zeta function.\\
 The paper is organized as follows: in Section \ref{sec:2} we review the setup and result of Bump, and state the generalization of that result. We also explore some properties of a sequence of polynomials arising from our work, and develop an equation which connects a scaled Mellin transform to a product involving the zeta function, along with our polynomials. Section \ref{sec:proofmainThm} is dedicated in its entirety to the proof of Theorem \ref{thm:pn_generalization}, and the Appendix contains some elementary - albeit lengthy - supplemental calculations we rely on in the proof of Theorem \ref{thm:pn_generalization}.
 \section{The polynomials with only zeros on a vertical line}\label{sec:2}

Let $f(x)$ be a complex-valued function on $(0,\infty)$. The Mellin transform of $f(x)$ is an analytic function of the complex 
variable $s$ defined by
$$
\int_0^\infty f(x)x^{s-1}dx. 
$$

Bump et al. \cite{BBump,BBump1} studied the properties of the zeros of the Mellin transforms of the Laguerre functions defined by
$$
{\frak L}_n^{(\alpha)}(x)=x^{\alpha/2}e^{-x/2}L_n^{(\alpha)}(x),\quad \alpha>-1, n=0,1,2,\ldots
$$
where $L_n^{(\alpha)}(x)$ are generalized Laguerre polynomials \cite{Andrews} generated by 
\begin{eqnarray}\label{e:Laguerre}
\sum_{n=0}^\infty L_n^{(\alpha)}(x)z^n={1\over (1-z)^{\alpha+1}}e^{-zx/(1-z)}.
\end{eqnarray}
The Mellin transforms of ${\frak L}_n^{(\alpha)}(x)$ are computed by 
\begin{eqnarray}\label{e:Mellin}
M_n^{(\alpha)}(s):=\int_0^\infty {\frak L}_n^{(\alpha)}(x)x^{s-1}dx =2^{s+\alpha/2}\Gamma(s+\alpha/2)P_n^{(\alpha)}(s), \quad n=0,1,2,\ldots
\end{eqnarray}
where $\Gamma$ is the gamma function and the polynomials $P_n^{(\alpha)}(s)$ are generated by the relation
\begin{eqnarray}\label{e:BB}
\sum_{n=0}^\infty P_n^{(\alpha)}(s)z^n =(1-z)^{s-1-\alpha/2}(1+z)^{-s-\alpha/2}.
\end{eqnarray} 
The zeros of the polynomials $P_n^{(\alpha)}(s)$ lie on the critical line ${\rm Re}(s)={1\over2}$ (see \cite{BBump1}). Due to this property, a connection between these polynomials and the Riemann hypothesis has been suggested. In addtition, the Mellin transforms of these polynomials are of much interest in connection with generalizations of Riemann's second proof of the analytic continuation of the Riemann zeta function $\zeta(s)$. B. Bump's result on the location of the zeros of the $P_n^{(\alpha)}(s)$ is a special case of the following theorem, which we will prove in Section \ref{sec:proofmainThm}.

\begin{thm}
\label{thm:pn_generalization} 
Let $\alpha_i$, $i=0,1,2, \ldots, N$, $p_i$, $i=1, 2, \ldots, N$, $p$ and $p^*$ be real numbers. Suppose that $|\alpha_0| <|\alpha_1|<\cdots<|\alpha_N|$. Define the sequence
$\{h_{n}(s)\}$ by the generating relation
\begin{eqnarray}\label{e:main}
\sum_{n=0}^{\infty}h_{n}(s)\frac{z^{n}}{n!}=(1-\alpha_{0}z)^{p+s}(1+\alpha_{0}z)^{p^{*}-s}\prod_{i=1}^{N}(1-\alpha_{i}^{2}z^{2})^{p_{i}}.
\end{eqnarray}
If $p^{*}+p\le0$, then for all $n$, the zeros of $h_{n}(s)$ lie on the vertical
line ${\rm Re}(s)=(p^{*}-p)/2=:c$. If $p^{*}+p>0$, then the same conclusion
holds except for $2\lceil c+p\rceil$ zeros. 
\end{thm}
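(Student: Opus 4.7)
The plan is to turn the generating relation into a contour integral whose integrand has a purely exponential dependence on $s$, and then to locate the zeros by deforming the contour and invoking a Fourier/Hermite--Biehler style argument.

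First I would introduce the uniformizing substitution $\alpha_{0}z=\tanh(\theta/2)$, so that $1\mp\alpha_{0}z=e^{\mp\theta/2}/\cosh(\theta/2)$, which gives
\[
(1-\alpha_{0}z)^{p+s}(1+\alpha_{0}z)^{p^{*}-s}=e^{-(s-c)\theta}\cosh^{-2q}(\theta/2),\qquad q:=\tfrac{p+p^{*}}{2}.
\]
Writing $\beta_{i}=\alpha_{i}/\alpha_{0}$ (so $|\beta_{i}|>1$ by the hypothesis $|\alpha_{0}|<|\alpha_{i}|$), the relation \eref{e:main} becomes
\[
\sum_{n=0}^{\infty}h_{n}(s)\frac{z^{n}}{n!}=e^{-(s-c)\theta}\cosh^{-2q}(\theta/2)\prod_{i=1}^{N}\bigl(1-\beta_{i}^{2}\tanh^{2}(\theta/2)\bigr)^{p_{i}},
\]
with all $s$-dependence isolated in the exponential. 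Cauchy's formula on a small positively oriented loop $\gamma$ around $\theta=0$ (using $dz=d\theta/(2\alpha_{0}\cosh^{2}(\theta/2))$ and $z^{-n-1}=\alpha_{0}^{n+1}\cosh^{n+1}(\theta/2)/\sinh^{n+1}(\theta/2)$) then yields
\[
\frac{h_{n}(s)}{n!}=\frac{\alpha_{0}^{n}}{4\pi i}\oint_{\gamma}e^{-(s-c)\theta}\,\Psi_{n}(\theta)\,d\theta,\qquad \Psi_{n}(\theta)=\frac{\cosh^{n-2q-1}(\theta/2)}{\sinh^{n+1}(\theta/2)}\prod_{i=1}^{N}\bigl(1-\beta_{i}^{2}\tanh^{2}(\theta/2)\bigr)^{p_{i}},
\]
and setting $s=c+it$ turns this into a complex Fourier-type integral.

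Next I would exploit parity. The map $\theta\mapsto-\theta$, combined with the parities of the factors in $\Psi_{n}$ (even $\cosh$, odd $\sinh$, even $\tanh^{2}$), reproduces the functional equation $h_{n}(2c-s)=(-1)^{n}h_{n}(s)$; together with the reality of the coefficients of $h_{n}$ (so $\overline{h_{n}(s)}=h_{n}(\bar{s})$), this forces $h_{n}(c+it)$ to be, up to an $i^{n}$, a real polynomial in $t$ of degree $n$ and parity determined by $n$. Zeros on the critical line thus correspond to real zeros of this real polynomial. To establish reality, I would deform $\gamma$ onto a contour hugging the real $\theta$-axis, with small indentations around the real branch points $\pm 2\tanh^{-1}(1/|\beta_{i}|)$ (which $|\beta_{i}|>1$ ensures are real and separated from zero). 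After deformation $h_{n}(c+it)$ appears as the Fourier transform of a real, sign-definite kernel assembled from the boundary values of $\Psi_{n}$, and a Hermite--Biehler/Rolle-style argument applied to this oscillatory integral then forces all zeros in $t$ to be real.

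Finally, for the exception count: when $p+p^{*}\le0$ the exponent $-2q$ of $\cosh$ is nonnegative, the deformation encounters no additional singularities, and the claim follows with no exceptions. When $p+p^{*}>0$, on the other hand, the exponent $n-2q-1$ of $\cosh$ in $\Psi_{n}$ is negative for small $n$ and $\cosh^{-2q}$ carries branch/pole singularities at $\theta=(2k+1)\pi i$; these interact with the deformation and, by an argument-principle count along the deformed contour, account for exactly $2\lceil q\rceil=2\lceil c+p\rceil$ zeros that escape the critical line. The main obstacle will be the rigorous execution of the contour deformation together with the verification of the sign-definite structure of the Fourier kernel: the branch cuts of $(1-\beta_{i}^{2}\tanh^{2}(\theta/2))^{p_{i}}$ on $\mathbb{R}$ demand careful tracking of boundary values and winding numbers, which are precisely the lengthy but essentially computational manipulations the authors relegate to the appendix.
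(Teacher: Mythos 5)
Your substitution $\alpha_0 z=\tanh(\theta/2)$ is correct, and the resulting representation
\[
\frac{h_n(s)}{n!}=\frac{\alpha_0^n}{4\pi i}\oint_\gamma e^{-(s-c)\theta}\,\Psi_n(\theta)\,d\theta
\]
together with the parity check (yielding $h_n(2c-s)=(-1)^nh_n(s)$) is a clean way to isolate the $s$-dependence. This is genuinely different from the paper's strategy, which keeps the integral in the $z$-variable, deforms onto Hankel loops around $(-\infty,-1]$ and $[1,\infty)$, and then performs a full saddle-point analysis with $\zeta=-it+\sqrt{1-t^2}$ followed by careful argument-principle bookkeeping on two ranges of $t$.

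However, the proposal has a genuine gap exactly where the hard work lies. You assert that, after deforming onto a contour hugging the real $\theta$-axis, $h_n(c+it)$ "appears as the Fourier transform of a real, sign-definite kernel," and that "a Hermite--Biehler/Rolle-style argument" forces reality of the zeros. Neither claim is substantiated, and both are likely false as stated in this generality. The factors $(1-\beta_i^2\tanh^2(\theta/2))^{p_i}$ with real non-integer $p_i$ have branch cuts along segments of the real $\theta$-axis; the boundary values from above and below differ by complex phases $e^{\pm i\pi p_i}$, so the resulting boundary-value kernel is not real, let alone sign-definite, and there is no P\'olya-type reality theorem that applies without additional structure. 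This is precisely why the paper does not attempt a kernel-positivity argument: the extra product $\prod_i(1-\alpha_i^2z^2)^{p_i}$ destroys the clean kernel structure that makes the $N=0$ Bump-style argument work, and the authors instead get control by showing (Lemmas~\ref{lem:asympsmallt} and~\ref{lem:globalasymp}) that the integral is dominated uniformly in $t$ by its saddle-point contribution, reducing the zero count to an explicit change-of-argument computation.

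The second gap is the exception count. You observe correctly that the singularities of $\cosh^{-2q}(\theta/2)$ at $\theta=(2k+1)\pi i$ matter when $q=(p+p^*)/2>0$, but the assertion that "by an argument-principle count along the deformed contour" one obtains exactly $2\lceil q\rceil=2\lceil c+p\rceil$ escaping zeros is not an argument; it names the answer. In the paper this count comes from the explicit formula for $\Delta\arg p(\zeta(t))$ over the two ranges of $t$ (Lemmas~\ref{lem:changeargglobal} and~\ref{lem:changeargsmallt}), where the quantity $\eta$ in \eqref{eq:etadef} and the term $\frac{\pi(1-p-c)}{2}$ conspire to produce the exact deficiency. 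That computation is the substance of the exception count, and your proposal does not carry it out nor indicate how a contour-deformation-around-$\pi i$ argument would reproduce it. To make your route work you would need, at minimum, a precise positivity lemma adapted to the discontinuous boundary values across the $\beta_i$-branch cuts and an actual winding-number calculation around $\theta=\pm\pi i$; as written, the heart of the theorem is assumed rather than proved.
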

\begin{rem} The polynomials $P_n^{(\alpha)}(s)$ as in \eqref{e:BB} are obtained from Theorem \ref{thm:pn_generalization} by setting $p=-1-\frac{\alpha}{2}, p^*=-\frac{\alpha}{2}$, $\alpha_0=1$ and $N=0$. Then $(p^*-p)/2=1/2$, and the the right hand side of \eqref{e:main} reproduces the right hand side of \eqref{e:BB}.
\end{rem}
\subsection{A sequence of ``intermediate'' polynomials} In this section we find an explicit formula for the polynomials $h_n(s)$ in terms of another sequence $\{ q_n(s)\}$. From equation \eqref{e:main} it is clear that we may view the polynomials $h_n(s)$ as generalizations of the polynomials $P_n^{(\alpha)}$ (c.f. equation \eqref{e:BB}). We are interested in the ``intermediate'' generalizations $q_n(s)$ not only because their combinations constitute the poynomials $h_n(s)$, but also because they have some simple properties in common with the Riemann zeta function which we wish to explore. \\
Before we emark on the discussion of the sequence $\{ q_n(s)\}$, we briefly review some basic Riordan theory. A polynomial sequence $\{p_n(s)\}_{n=0}^{\infty}$ is said to be a \textit{Sheffer sequence} for the pair $(g, f)$ if there exist $g, f \in \mathbb{C}[[z]]$ with $g(0)\ne0$, $f(0)=0$, and $f'(0)\ne0$, such that
\begin{equation} \label{eq:sheffer}
    g(z)e^{sf(z)} = \sum_{n=0}^{\infty} p_n(s) \frac{z^n}{n!},
\end{equation}
where $p_n(s)$ is a polynomial of degree $n$. Riordan matrices \cite{Lou} are fundamental tools in the study of Sheffer sequences in terms of generating functions. It is known that $\{p_n(s)\}_{n\ge0}$ is a Sheffer sequence for $(g,f)$ if and only if its coefficient matrix $A = [p_{n,k}]_{n,k\ge0}$ is an \emph{exponential Riordan matrix} given by
$$
    p_{n,k} = \frac{n!}{k!} [z^n]\, g(z)f(z)^k,
$$
where $[z^n]$ denotes the coefficient extraction operator. As is customary, we denote this matrix as $A = \langle g(z), f(z) \rangle$ or $A = \langle g, f \rangle$. If $b = (b_0, b_1, \dots)^T$ is a column vector generated by $b(z) = \sum_{n\ge0} b_n z^n/n!$, then the action of the matrix $A = \langle g(z), f(z) \rangle$ on $b(z)$ is given by
$$
    \langle g(z), f(z) \rangle b(z) = g(z) b(f(z)),
$$
known as the \emph{Riordan fundamental property}. Given two matrices $\langle g, f \rangle$ and $\langle u, v \rangle$, their product is given in terms of generating functions as follows:
$$
\langle g, f \rangle \cdot \langle u, v \rangle = \langle g \cdot u(f), v(f) \rangle.
$$
This multiplication is known as the \textit{Riordan product}.\\
With these basic notions in hand, we now discuss the polynomials $\{ q_n(s)\}$ and their relation to Theorem \ref{thm:pn_generalization}. Using the notation of theorem, let 
$$
G_0(z)=(1-\alpha_0z)^p(1+\alpha_0z)^{p*}\quad{\text and}\quad G_i(z)=(1-\alpha_i^2z^2)^{p_i}\quad {\text for}\quad i\ge1,
$$
and consider the generating functions
$$
g(z) = \prod_{i=0}^{N} G_i(z), \qquad \textrm{and} \qquad f(z) = \ln\left( \frac{1 - \alpha_0 z}{1 + \alpha_0 z} \right).
$$
Then the sequence $\{h_n(s)\}_{n=0}^\infty$ can be regarded as the Sheffer sequence for the pair $(g, f)$.
Using the Riordan product, the matrix $\langle g(z), f(z) \rangle$ can be factored as
$\left\langle \prod_{i=1}^{N} G_i(z), z \right\rangle \left\langle G_0(z), f(z) \right\rangle.$
Consequently, applying the Riordan fundamental property yields
\begin{align}
    \sum_{n=0}^{\infty} h_n(s) \frac{z^n}{n!} &= g(z)e^{sf(z)} = \langle g(z), f(z) \rangle e^{sz} \notag \\
    &= \left\langle \prod_{i=1}^{N} G_i(z), z \right\rangle \left( G_0(z) e^{sf(z)} \right). \label{eq:factorized-sheffer}
\end{align}
We expand the product $\prod_{i=1}^{N} G_i(z)$ as a power series:
$$
    \prod_{i=1}^{N} G_i(z) = \sum_{k=0}^{\infty} b_k z^k,
$$
and let $\{q_n(s)\}_{n=0}^{\infty}$ be the Sheffer sequence for the pair $(G_0, f)$. Then the $(n,k)$-entry of the matrix $\left\langle \prod_{i=1}^{N} G_i(z), z \right\rangle$ is $\frac{n!}{k!} b_{n-k}$.
Moreover, we have
\begin{equation} \label{e:Lagu}
    G_0(z)e^{sf(z)} = \sum_{n=0}^{\infty} q_n(s) \frac{z^n}{n!} = (1 - \alpha_0 z)^{p + s}(1 + \alpha_0 z)^{p^* - s}.
\end{equation}
From equation \eqref{eq:factorized-sheffer} we hence deduce that
\begin{equation} \label{eq:hns-convolution}
    h_n(s) = \sum_{k=0}^{n} \frac{n!}{k!} b_{n-k} q_k(s).
\end{equation}
Once more, setting $\alpha_0=1$, $p=-1-\alpha/2$ and $p^*=-\alpha/2$ in equation \eqref{e:Lagu}, we immediately obtain the right hand side of equation \eqref{e:BB} so that the polynomials $q_n(s)$ are a direct generalization of the polynomials $P_n^{(\alpha)}(s)$. The next result is an immediate consequence of Theorem \ref{thm:pn_generalization}. 

\begin{cor}\label{thm:pn_generalization-1} Let $\{q_n(s)\}_{n=0}^\infty$ be the polynomial sequence generated by \eref{e:Lagu}.
If $p^{*}+p\le0$, then all the zeros of $q_{n}(s)$ lie on the vertical
line ${\rm Re}(s)=(p^{*}-p)/2=:c$. If $p^{*}+p>0$, then the same conclusion
holds except for $2\lceil c+p\rceil$ zeros. 
\end{cor}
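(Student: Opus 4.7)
The plan is to deduce Corollary \ref{thm:pn_generalization-1} as a direct specialization of Theorem \ref{thm:pn_generalization}, with no new analysis required. Specifically, I would take $N=0$ in the statement of the theorem. Under this choice, the hypothesis $|\alpha_0|<|\alpha_1|<\cdots<|\alpha_N|$ reduces to a vacuous condition on the single number $\alpha_0$, and the finite product $\prod_{i=1}^{N}(1-\alpha_i^2 z^2)^{p_i}$ is an empty product, hence identically equal to $1$. Consequently the defining relation \eqref{e:main} collapses to
$$
\sum_{n=0}^\infty h_n(s)\frac{z^n}{n!} = (1-\alpha_0 z)^{p+s}(1+\alpha_0 z)^{p^*-s},
$$
which is exactly the generating relation \eqref{e:Lagu} for $\{q_n(s)\}$. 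By uniqueness of Taylor coefficients we have $h_n(s)=q_n(s)$ for every $n\ge 0$.

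As an alternative (and arguably more structural) route, I would use the convolution identity \eqref{eq:hns-convolution}. When $N=0$ the series $\prod_{i=1}^N G_i(z)=1$ has coefficients $b_0=1$ and $b_k=0$ for $k\ge1$, so \eqref{eq:hns-convolution} reduces term-by-term to $h_n(s)=\tfrac{n!}{n!}b_0\,q_n(s)=q_n(s)$. Either viewpoint gives the same conclusion: the polynomial sequence $\{q_n(s)\}$ is literally a subfamily of the sequences covered by Theorem \ref{thm:pn_generalization}.

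Having identified $q_n(s)$ with $h_n(s)$ in this specialization, the zero-location conclusions of Theorem \ref{thm:pn_generalization} transfer verbatim: when $p^*+p\le 0$ every zero of $q_n(s)$ lies on the line $\Re(s)=(p^*-p)/2$, and when $p^*+p>0$ the same holds with the possible exception of $2\lceil c+p\rceil$ zeros, where $c=(p^*-p)/2$. There is essentially no obstacle here; the corollary is a bookkeeping consequence, and the only care required is to observe that the hypothesis chain on $|\alpha_i|$ is vacuous and the empty product is handled correctly.
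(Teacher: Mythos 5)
Your specialization to $N=0$, with the empty product equal to $1$ and the consequent identification $h_n(s)=q_n(s)$, is exactly the deduction the paper has in mind when it calls the corollary an immediate consequence of Theorem~\ref{thm:pn_generalization}. The argument is correct and matches the paper's (unstated) reasoning.
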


Substituting $z$ by $z/\alpha_{0}$ \footnote{The $\alpha_0=0$ case is degenerate, as in this case all polynomials $\{h_n(s)\}$ are constant.} allows us henceforth to assume that $\alpha_{0}=1$ in \eref{e:Lagu}. We note that the polynomials $q_n(s)$ have basic properties in common with the Riemann zeta function $\zeta(s)$. For example, it can be easily shown that the $q_n(s)$ satisfy the functional equation
\begin{eqnarray*}
q_n(s)=(-1)^n q_n(p^*-p-s),
\end{eqnarray*}
which is a standard result in the theory of the zeta function. In particular, if $p^*-p=1$ then $q_n(s)=(-1)^nq_n(1-s)$. Using the generating relation in equation \eqref{e:Lagu}, Newton's Binomial Theorem and the binomial convolution for exponential generating functions we obtain the following explicit formula for the $q_n(s)$:
$$
q_n(s)=n!\sum_{k=0}^n(-1)^k{p+s\choose k}{p^*-s\choose n-k}, \qquad n=0,1,2,3,\ldots
$$
For the readers' convenience, we list the first few polynomials $q_n(s)$ for $p=0, p^*=1$ in Table \ref{table:q_ns}.
\begin{table}[htp]
\caption{The $q_n(s)$ for small $n$, $p=0, p^*=1$}
\begin{center}
\begin{tabular}{cc}
$n$ & $q_n(s)$ \\ 
\hline
\hline
$0$ & $1$ \\
$1$ & $-2s+1$ \\
$2$ & $4s^2-4s$ \\
$3$ & $-8s^3+12s^2-4s$\\
$4$ & $16s^4-32s^3+32s^2-16s$\\
\hline
\end{tabular}
\end{center}
\label{table:q_ns}
\end{table}%

\begin{thm} For $n \geq 0$, the polynomials $q_n(s)$ given in equation \eqref{e:Lagu} satisfy the following recurrence relation:
\begin{eqnarray}\label{e:recur}
2q_{n+1}(s)=(p^*-p-2s)q_n(s)+(p^*-s)q_n(s+1)-(p+s)q_n(s-1),
\end{eqnarray}
with the intial condition $q_{0}(s)=1$.
\end{thm}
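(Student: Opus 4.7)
\medskip

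\noindent\textbf{Proof plan.} Set $F(z,s):=(1-z)^{p+s}(1+z)^{p^{*}-s}$, the exponential generating function of $\{q_{n}(s)\}$ as in \eqref{e:Lagu}. The plan is to derive a single functional identity, valid as an identity of formal power series in $z$, that reads
\[
2\,\partial_{z}F(z,s)=(p^{*}-p-2s)\,F(z,s)+(p^{*}-s)\,F(z,s+1)-(p+s)\,F(z,s-1),
\]
and then read off the claimed recurrence by extracting the coefficient of $z^{n}/n!$ on both sides. The initial condition $q_{0}(s)=1$ is immediate from $F(0,s)=1$.

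\medskip

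\noindent The first step is to record two elementary identities for $F$. From the explicit product form one has the shift relations
\[
F(z,s+1)=\frac{1-z}{1+z}\,F(z,s),\qquad F(z,s-1)=\frac{1+z}{1-z}\,F(z,s),
\]
and the logarithmic derivative identity
\[
\partial_{z}F(z,s)=F(z,s)\left[\frac{-(p+s)}{1-z}+\frac{p^{*}-s}{1+z}\right],
\]
which, after clearing denominators, rearranges to the first-order linear ODE
\[
(1-z^{2})\,\partial_{z}F(z,s)=\bigl[(p^{*}-p-2s)-(p^{*}+p)z\bigr]\,F(z,s).
\]

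\medskip

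\noindent The second step is to combine the three shift/evaluation terms on the right of the desired identity over the common denominator $1-z^{2}$. Substituting the shift relations and expanding,
\[
(p^{*}-p-2s)(1-z^{2})+(p^{*}-s)(1-z)^{2}-(p+s)(1+z)^{2}
\]
simplifies to $2\bigl[(p^{*}-p-2s)-(p^{*}+p)z\bigr]$ (the $z^{2}$ terms cancel). Dividing by $1-z^{2}$ and invoking the ODE just derived produces $2\,\partial_{z}F(z,s)$, which establishes the functional identity displayed above.

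\medskip

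\noindent The final step is coefficient extraction. Since $\partial_{z}F(z,s)=\sum_{n\ge 0}q_{n+1}(s)\,z^{n}/n!$, comparing the $z^{n}/n!$ coefficients on both sides of the functional identity yields exactly \eqref{e:recur}. The only step requiring care is the algebraic verification in the middle paragraph; it is routine but must be carried out attentively, and the cancellation of the $z^{2}$ coefficient is what makes the combination match $2\,\partial_{z}F$ rather than a more complicated expression involving $q_{n-1}$.
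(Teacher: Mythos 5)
Your proof is correct, and it is essentially the same as the paper's: the key algebraic identity you verify, namely $(p^{*}-p-2s)(1-z^{2})+(p^{*}-s)(1-z)^{2}-(p+s)(1+z)^{2}=2\bigl[(p^{*}-p-2s)-(p^{*}+p)z\bigr]$, is precisely the decomposition the paper writes (after multiplying through by $2$) for the numerator $p^{*}-p-2s-p^{*}z-pz$ appearing in $\partial_{z}\varphi$. The only difference is one of presentation: the paper differentiates and then factors, while you start from the shifted generating functions and confirm that their combination reproduces the derivative via the ODE; both routes hinge on the identical computation.
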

\begin{proof} In order to simplify notation, let 
\begin{eqnarray}\label{e:GF}
 \varphi(z,s):=(1-z)^{p+s}(1+z)^{p^{*}-s}=\sum_{n=0}^{\infty}q_{n}(s){z^n/n!}.
\end{eqnarray}
By simple computation we obtain
\begin{eqnarray}\label{e:recur-1}
\sum_{n=0}^{\infty}q_{n+1}(s){z^n\over n!}&=&{\partial\over \partial z}\sum_{n=0}^{\infty}q_{n}(s){z^n\over n!}={\partial\over \partial z}(1-z)^{p+s}(1+z)^{p^{*}-s}\nonumber\\
&=&(1-z)^{p+s-1}(1+z)^{p^*-s-1}(p^*-p-2s-p^*z-pz).
\end{eqnarray}
Since 
$$
p^*-p-2s-p^*z-pz={1\over2}\left((p^*-p-2s)(1-z)(1+z)+(p^*-s)(1-z)^2+(-p-s)(1+z)^2\right),
$$
it follows from equation \eref{e:recur-1} that
$$
\sum_{n\ge0}q_{n+1}(s){z^n\over n!}={1\over2}\left((p^*-p-2s)\varphi(z,s)+(p^*-s)\varphi(z,s+1)+(-p-s)\varphi(z,s-1)\right),
$$
which proves the claim.
\end{proof}
The following theorem - while interesting in its own right - will be used to establish an interlacing property of the zeros of the polynomials $q_n(s)$ (c.f Theorem \ref{interlace})
\begin{thm}
For $n\ge0$, the polynomials $q_n$ given in equation \eqref{e:Lagu} satisfy the following three-term recurrence relation:
\begin{eqnarray}  \label{e:precur2}
q_{n+1}(s)=\left(p^*-p-2s\right)q_n(s)+n\left(n-p^*-p-1\right)q_{n-1}(s).
\end{eqnarray}
with the initital conditions  $q_0(s)=1$ and $q_{-1}(s)=0$.
\end{thm}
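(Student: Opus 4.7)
The plan is to derive a first-order ODE in $z$ (with $s$ as a parameter) satisfied by the generating function $\varphi(z,s)=(1-z)^{p+s}(1+z)^{p^*-s}$ from \eqref{e:GF}, then read off the recurrence by extracting the coefficient of $z^n$ on both sides.

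First I would differentiate $\varphi$ directly to get
\[
\varphi_z(z,s)=(1-z)^{p+s-1}(1+z)^{p^*-s-1}\bigl[(p^*-p-2s)-(p+p^*)z\bigr],
\]
after combining the two terms from the product rule over a common factor. Multiplying through by $(1-z^2)$ collapses the exponents back to those of $\varphi$, so I obtain the clean ODE
\[
(1-z^2)\,\varphi_z(z,s)=\bigl[(p^*-p-2s)-(p+p^*)z\bigr]\,\varphi(z,s).
\]
This is the key identity; the rest is bookkeeping.

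Next I would substitute $\varphi(z,s)=\sum_{n\ge0}q_n(s)\,z^n/n!$ into both sides. The left side becomes
\[
\sum_{n\ge0}\frac{q_{n+1}(s)}{n!}\,z^n-\sum_{n\ge2}\frac{q_{n-1}(s)}{(n-2)!}\,z^n,
\]
while the right side becomes
\[
\sum_{n\ge0}\frac{(p^*-p-2s)q_n(s)}{n!}\,z^n-\sum_{n\ge1}\frac{(p+p^*)q_{n-1}(s)}{(n-1)!}\,z^n.
\]
Equating the coefficient of $z^n/n!$ on each side and simplifying the factorials yields exactly
\[
q_{n+1}(s)-n(n-1)q_{n-1}(s)=(p^*-p-2s)q_n(s)-n(p+p^*)q_{n-1}(s),
\]
which rearranges to \eqref{e:precur2}. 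The small cases $n=0,1$ are handled by the convention $q_{-1}=0$ and the direct verification $q_1(s)=p^*-p-2s$, both consistent with the formula.

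There is no real obstacle here — the only thing to watch is that the factor $(1-z^2)$ appears naturally when one clears the shifted exponents in $\varphi_z$, so that the ODE has polynomial (rather than rational) coefficients; this is what makes the coefficient-matching produce a genuine three-term recurrence with polynomial coefficients in $n$ and $s$ rather than a more complicated relation.
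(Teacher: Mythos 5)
Your proof is correct and is essentially the same as the paper's: both hinge on the computation of $\varphi_z$, which yields the identity $(1-z^2)\,\varphi_z=[(p^*-p-2s)-(p+p^*)z]\,\varphi$, and then extract coefficients (the paper organizes this by summing the right-hand side of the recurrence against $z^n/n!$ and showing it collapses to $\varphi_z$, whereas you state the ODE up front and match coefficients, but it is the same calculation). Your check of the small cases via the convention $q_{-1}=0$ is a nice touch that the paper leaves implicit.
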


\begin{proof} Differentiating equation \eqref{e:GF} successively we obtain 
\begin{equation*}
\sum_{n=0}^{\infty} nq_{n-1}(s){\frac{z^n}{n!}}=z\varphi(z,s)\quad{\text{and}}%
\quad \sum_{n=0}^{\infty} n^2q_{n-1}(s){\frac{z^n}{n!}}=z\varphi(z,s)+z^2{\frac{%
\partial }{\partial z}}\varphi(z,s).  
\end{equation*}
Multiplying the right hand side of (\ref{e:precur2}) by ${\frac{z^n}{n!}}$%
, and taking the summation over all $n\ge0$ yields 
\begin{eqnarray*}
&&(p^*-p-2s)\sum_{n=0}^{\infty}q_{n}(s){\frac{z^n}{n!}}%
+\sum_{n=0}^{\infty} n^2q_{n-1}(s){\frac{z^n}{n!}}%
-(p+p^*+1)\sum_{n=0}^{\infty} nq_{n-1}(s){\frac{z^n}{n!}} \\
&&=(p^*-p-2s)\varphi(z,s)+z\varphi(z,s)+z^2{\frac{%
\partial }{\partial z}}\varphi(z,s)-(p+p^*+1)z\varphi(z,s) \\
&&=(1-z)^{p+s-1}(1+z)^{p^*-s-1}\left(p^*-p-2s-pz-p^*z)\right) \\
&&=(1-z)^{p+s-1}(1+z)^{p^*-s-1}\left(-(p+s)(1+z)+(q-s)(1-z)\right) \\
&&={\frac{\partial }{\partial z}}\varphi(z,s)=\sum_{n=0}^{\infty} q_{n+1}(s){%
\frac{z^n}{n!}},
\end{eqnarray*}
proving the claimed recurrence relation for all $n\ge0$.
\end{proof}

We now investigate whether, and in what sense, the zeros of the polynomials
$q_n(s)$ interlace\footnote{The interlacing property - and more restrictively the notion of being in proper position - of families of polynomials plays a central role in stability and control theory, the theory of orthogonal polynomials, and combinatorics and total positivity. Further discussion of this topic goes beyond the scope of the current paper.}. Recall (see for example Liu and Wang \cite{BLiu}) that if $f,g \in \mathbb{R}[x]$ have only real zeros, and $\{r_i\}$ and $\{s_j\}$ denote the zeros of
$f$ and $g$ in nonincreasing order respectively, then we say that $g$
{\it interlaces} $f$, denoted by $g\preceq f$, if ${\rm deg}(f)={\rm
deg}(g)+1=n$ and
\begin{eqnarray*}
r_n\le s_{n-1}\le\cdots\le s_2\le r_2\le s_1\le r_1.
\end{eqnarray*}

Using the notation of Corollary \ref{thm:pn_generalization-1} we extend this notion to the polynomials $\{ q_n(s) \}$, by saying that the zeros $\left\{c+r_{n,k} i\mid r_k\in \mathbb{R}, k \in \mathbb{N}\right\}$ of $q_n(s)$ are in nonincreasing order if $\{r_{n,k}\}$
is in nonincreasing order. The concept of the interlacing property is extended to the family $\{ q_n(s)\}$ analogously.
\begin{lem}\label{Liu}\cite{BLiu}
Let $F,g,f\in\mathbb{R}[x]$ satisfy the following conditions:
\begin{itemize}
\item[{\rm (i)}] $F(x)=a(x)f(x)+b(x)g(x)$, where $a,b\in\mathbb{R}[x]$ such that ${\rm deg}(F)={\rm deg}(f)$ or ${\rm
deg}(f)+1$.
\item[{\rm (ii)}] All zeros of $f$ and $g$ are real and $g\preceq
f$.
\item[{\rm (iii)}] $F$ and $g$ have leading coefficients of the same
sign.
\end{itemize}
Suppose that $b(r)\le0$ whenever $f(r)=0$. Then all zeros of $F$ are
real and $f\preceq F$.
\end{lem}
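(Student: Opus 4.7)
The plan is to prove the lemma by a sign-change argument at the zeros of $f$, combined with the intermediate value theorem. Write the real zeros of $f$ in nonincreasing order as $r_1\ge r_2\ge\cdots\ge r_n$ with $n=\deg f$, and those of $g$ as $s_1\ge s_2\ge\cdots\ge s_{n-1}$; by hypothesis
\[
r_n\le s_{n-1}\le r_{n-1}\le\cdots\le s_1\le r_1.
\]
My strategy is to first prove the statement under the generic assumption that all the $r_i$ are distinct and distinct from the $s_j$, and that $b(r_i)\neq 0$ for every $i$; the general case can then be recovered by perturbation.

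The key observation is that at each root of $f$,
\[
F(r_i)=a(r_i)f(r_i)+b(r_i)g(r_i)=b(r_i)\,g(r_i).
\]
Under the strict interlacing assumption, the quantities $g(r_1),g(r_2),\ldots,g(r_n)$ alternate in sign, since the $r_i$ fall one-by-one into the $n$ consecutive open intervals determined by the zeros of $g$, on each of which $g$ has constant sign. Multiplying by $b(r_i)\le 0$ preserves this strict alternation in the sequence $F(r_1),F(r_2),\ldots,F(r_n)$, merely flipping its orientation. By the intermediate value theorem, $F$ therefore has at least one real zero in each of the open intervals $(r_{i+1},r_i)$, $i=1,\ldots,n-1$, accounting for $n-1$ real zeros of $F$.

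The remaining zero(s) come from the leading-coefficient hypothesis. Since $F$ and $g$ share the sign of the leading coefficient, the asymptotic sign of $F(x)$ as $x\to+\infty$ agrees with that of $g(x)$, and hence with that of $g(r_1)$. Because $F(r_1)=b(r_1)g(r_1)$ has the opposite sign, the intermediate value theorem produces a further zero of $F$ in $(r_1,\infty)$. If $\deg F=n+1$, a parallel analysis at $-\infty$ (where the degree-parity now introduces a sign discrepancy between $F(-\infty)$ and $F(r_n)$) yields one more zero of $F$ in $(-\infty,r_n)$; if $\deg F=n$, the asymptotic sign of $F$ at $-\infty$ already matches that of $F(r_n)$ and no additional zero is forced. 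In both cases the number of real zeros so exhibited equals $\deg F$, so all zeros of $F$ are real, and the one-per-interval placement establishes $f\preceq F$.

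The main obstacle is the treatment of the degenerate configurations, namely repeated zeros of $f$, coincidences $r_i=s_j$ between zeros of $f$ and $g$, and vanishings $b(r_i)=0$ (where the sign pattern collapses but $r_i$ is automatically a zero of $F$). I would dispatch these by a continuity argument: replace $(f,g,b)$ by perturbed data $(f_\varepsilon,g_\varepsilon,b_\varepsilon)$ in generic position that still satisfy the hypotheses of the lemma, apply the argument above, and then let $\varepsilon\to 0$. Both the reality of the zeros and the interlacing relation $f\preceq F$ are closed conditions on the coefficient vectors, so they pass to the limit and yield the lemma in full generality.
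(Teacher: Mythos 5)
The paper does not prove this lemma; it is cited directly from Liu and Wang \cite{BLiu}, so there is no in-paper proof to compare against. Your argument is therefore being judged on its own merits, and it is essentially correct: it is the standard sign-alternation argument for interlacing. Under strict interlacing, the values $g(r_1),\ldots,g(r_n)$ alternate in sign, and multiplying by $b(r_i)<0$ yields an alternating sequence $F(r_i)=b(r_i)g(r_i)$, producing $n-1$ zeros of $F$ in the gaps $(r_{i+1},r_i)$. The leading-coefficient hypothesis (iii) then forces $\mathrm{sgn}(F(+\infty))=\mathrm{sgn}(g(r_1))=-\mathrm{sgn}(F(r_1))$, giving one more zero in $(r_1,\infty)$, and a parity check at $-\infty$ gives the last zero precisely when $\deg F=\deg f+1$. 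This accounts for all $\deg F$ zeros and places one in each required interval, so $f\preceq F$.

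Two small points are worth tightening if this were to be made fully rigorous. First, the perturbation step is asserted rather than constructed: you should verify that the perturbed data $(f_\varepsilon, g_\varepsilon, a_\varepsilon, b_\varepsilon)$ can be chosen to simultaneously preserve (i) the degree relation, (ii) strict interlacing of $f_\varepsilon$ and $g_\varepsilon$, (iii) the sign condition on the leading coefficient of $F_\varepsilon=a_\varepsilon f_\varepsilon+b_\varepsilon g_\varepsilon$ (which could in principle change if there is near-cancellation in the leading terms), and the strict negativity $b_\varepsilon<0$ at the perturbed zeros of $f_\varepsilon$. Second, the paper's stated definition of $g\preceq f$ requires $\deg f=\deg g+1$, under which the conclusion $f\preceq F$ only makes sense when $\deg F=\deg f+1$; in the case $\deg F=\deg f$, your count of $n$ real zeros is correct, but the conclusion must be read under the broader Liu--Wang notion of interlacing (which permits equal degrees). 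Neither issue is a flaw in your reasoning, but both deserve an explicit sentence.
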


\begin{thm}\label{interlace} Consider the setup of Theorem \ref{thm:pn_generalization} along with the ensuing discussion. If $p^*+p\le0$ then the zeros of the polynomials
$q_n(s)$ interlace, i.e.
\begin{eqnarray*}
q_n(s)\preceq q_{n+1}(s),\;\; n\ge1.
\end{eqnarray*}
\end{thm}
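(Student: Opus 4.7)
The plan is to reduce the extended interlacing claim for $\{q_n\}$ on the line $\Re s = c$ to a standard real-line interlacing for a suitably normalized family $\{\tilde{Q}_n\}$ on $\mathbb{R}$, and then apply Lemma~\ref{Liu} inductively through the three-term recurrence \eqref{e:precur2}.

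Set $c := (p^*-p)/2$. By Corollary~\ref{thm:pn_generalization-1}, under the hypothesis $p^*+p \le 0$ every zero of $q_n$ has the form $c + it$ with $t \in \mathbb{R}$. Substituting $s = c + it$ into \eqref{e:precur2} collapses the first coefficient to $p^*-p-2s = -2it$, giving
\begin{equation*}
q_{n+1}(c+it) = -2it\, q_n(c+it) + A_n\, q_{n-1}(c+it), \qquad A_n := n(n - p^* - p - 1).
\end{equation*}
Multiplying through by $i^{n+1}$ and using $i^{n+2} = -i^n$, the sequence $\tilde{Q}_n(t) := i^n q_n(c+it)$ satisfies the real-coefficient recurrence
\begin{equation*}
\tilde{Q}_{n+1}(t) = 2t\, \tilde{Q}_n(t) - A_n\, \tilde{Q}_{n-1}(t), \qquad \tilde{Q}_0 = 1, \quad \tilde{Q}_1(t) = 2t.
\end{equation*}
By induction each $\tilde{Q}_n$ lies in $\mathbb{R}[t]$. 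The leading coefficient of $q_n$ equals $(-2)^n$, read off the explicit sum displayed before \eqref{e:recur}; hence the leading coefficient of $\tilde{Q}_n$ is $i^n(-2i)^n = 2^n > 0$. Finally, the hypothesis $p^*+p \le 0$ forces $A_n \ge n(n-1) \ge 0$ for every $n \ge 1$.

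With these preparations, I would prove $\tilde{Q}_n \preceq \tilde{Q}_{n+1}$ by induction on $n$, applying Lemma~\ref{Liu} with
\begin{equation*}
F = \tilde{Q}_{n+1}, \qquad a(t) = 2t, \qquad f = \tilde{Q}_n, \qquad b(t) = -A_n, \qquad g = \tilde{Q}_{n-1}.
\end{equation*}
Condition~(i) holds because $\deg F = \deg f + 1$; condition~(iii) holds because both leading coefficients are positive; and the side condition $b(r) \le 0$ whenever $f(r)=0$ holds trivially since $b$ is the nonpositive constant $-A_n$. Condition~(ii) is the inductive hypothesis $\tilde{Q}_{n-1} \preceq \tilde{Q}_n$, with the $n=1$ base case reducing to the trivial statement $\tilde{Q}_0 = 1 \preceq 2t = \tilde{Q}_1$. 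The lemma then yields $\tilde{Q}_n \preceq \tilde{Q}_{n+1}$. Since the real zeros of $\tilde{Q}_n(t)$ are exactly the imaginary parts $r_{n,k}$ of the zeros $c + i r_{n,k}$ of $q_n(s)$, this real-line interlacing is identically the extended interlacing $q_n \preceq q_{n+1}$ defined just before Lemma~\ref{Liu}.

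The only delicate point, and the step I would be most careful about, is the boundary case $p^*+p = 0$: here $A_1 = 0$ and $\tilde{Q}_2(t) = 4t^2$ has a double zero at the origin coinciding with the unique zero of $\tilde{Q}_1$. Liu and Wang's nonstrict inequalities in the definition of interlacing still render $\tilde{Q}_1 \preceq \tilde{Q}_2$ valid, but I would verify this by hand rather than rely on Lemma~\ref{Liu} at that single degenerate step; from $n = 2$ onward $A_n \ge 2 > 0$ restores nondegeneracy and the inductive argument proceeds as written. Beyond this minor bookkeeping I do not expect any genuine obstacle, since the recurrence falls into the standard Favard-style template already packaged in Lemma~\ref{Liu}.
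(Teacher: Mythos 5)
Your proposal is correct and follows essentially the same route as the paper: normalize to real polynomials via $i^n q_n(c+it)$, convert the three-term recurrence \eqref{e:precur2} into a real-coefficient recurrence, and run Lemma~\ref{Liu} inductively with $a(t)=2t$ and constant $b=-A_n\le 0$. The only cosmetic difference is that the paper verifies the base step $\widehat q_1\preceq\widehat q_2$ by direct computation and begins applying Lemma~\ref{Liu} at $n=2$, whereas you start one step earlier from $\widetilde Q_0\preceq\widetilde Q_1$ and explicitly flag the degenerate boundary $p^*+p=0$ (where $A_1=0$ and $\widetilde Q_2$ has a double zero at the origin), a point the paper passes over silently; your care there is sound since the Liu--Wang interlacing is non-strict.
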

\begin{proof} Let $p^*+p\le0$ and $\widehat q_n(s)=i^nq_n(c+is)$ with $c={p^*-p\over 2}$. It can be readily shown that
$\widehat q_n(s)\in\mathbb{R}[s]$ and all zeros of $\widehat q_n(s)$ are real. Thus it
suffices to show that $\widehat q_n(s)\preceq \widehat q_{n+1}(s)$ for $n\ge1$. A direct
computation shows that $\widehat q_1(s)\preceq \widehat q_2(s)$. Let $n\ge2$. Substituting
$c+is$ into $s$ in \eref{e:precur2}, we obtain
\begin{eqnarray*}
\widehat q_{n+1}(s)=2s \widehat q_n(s)-n(n-p^*-p-1)\widehat q_{n-1}(s).
\end{eqnarray*}
Set
$$
F= \widehat q_{n+1}(s),\;f= \widehat q_n(s),\;g=\widehat q_{n-1}(s)
$$
and
$$
a=2s,\;b=-n(n-p^*-p-1).
$$
Clearly, the conditions (i) and (iii) in Lemma \ref{Liu} hold for all $n\ge2$. We complete the proof by verifying that condition (ii) holds for all $n\ge2$. Since $\widehat q_1(s)\preceq \widehat q_2(s)$, the condition (ii) also
holds for $n=2$. Moreover, by the assumption $p^*+p\le0$ we have $b\le0$ for all $n\ge2$. Lemma \ref{Liu} now implies that $\widehat q_2(s)\preceq \widehat q_3(s)$, and by repeating this argument for $n=3,4,5 \ldots$ we establish that $\widehat q_n(s)\preceq \widehat q_{n+1}(s)$ for all $n\ge2$, which completes the proof.
\end{proof}

\begin{ex} Taking $n=9$, we obtain 
\begin{eqnarray*}
q_9(s)&=&362880 - 1297152 s + 1884672 s^2 - 1838080 s^3 + 919296 s^4 - 
 462336 s^5 \\
 &+& 96768 s^6 - 30720 s^7 + 2304 s^8 - 512 s^9 \\
q_{10}(s)&=&3628800 - 12971520 s + 21441024 s^2 - 18380800 s^3 + 12869120 s^4 - 
 4623360 s^5 \\
 &+& 1892352 s^6 - 307200 s^7 + 84480 s^8 - 5120 s^9 + 1024 s^{10}.
\end{eqnarray*}
Figure \ref{fig:interlaceEx} shows the location of the zeros of these polynomials.
\begin{figure}[htbp]
\begin{center}
\includegraphics[height=3 in]{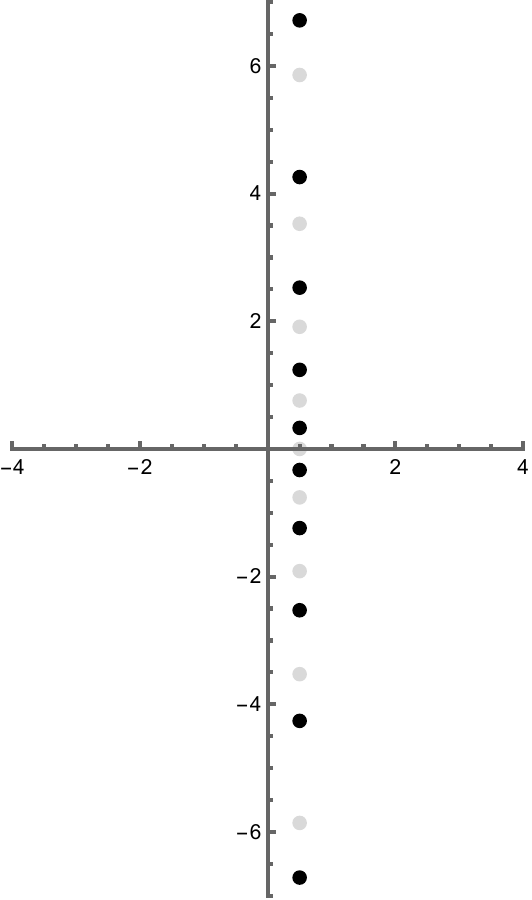}
\caption{The zeros of $q_9(s)$ and $q_{10}(s)$, with $p=-1$ and $p^*=0$}
\label{fig:interlaceEx}
\end{center}
\end{figure}

\end{ex}

\subsection{A connection to the Riemann zeta function $\zeta(s)$} 
This section establishes a connection between the Riemann zeta function, our sequence of polynomials $\{q_n(s)\}$ and a scaled Mellin transforms of a family of fucntions reltated to the generalized Laguerre polynomials.\\
We first recall  (see \cite{BAtaki}) that the Mellin transform of a family of functions related to the generalized Laguerre polynlmials $L_k^{(\alpha)}(x)$ is expressed in terms of the Meixner polynomials $\mathfrak{M}_k$: 
\begin{eqnarray}  \label{e:LMellin}
\int_0^\infty k!L_k^{(b-1)}\left((1-{c^{-1}})x\right)e^{-x}x^{s-1}dx={\Gamma}%
(s)\mathfrak{M}_k\left(-s;b,c\right),
\end{eqnarray}
where $\mathfrak{M}_k(x;b,c)$ are generated by 
\begin{eqnarray}  \label{e:Meix}
\sum_{n=0}^{\infty}\mathfrak{M}_n(x;b,c){\frac{z^n}{n!}}=\left(1-{\frac{z}{c}}%
\right)^x\left({\frac{1}{1-z}}\right)^{x+b},\quad c\ne1, \quad b \in \mathbb{R}.
\end{eqnarray}
 It is also known (see for example \cite{BPoly}) that if $\mathcal{M}(s)$ is the Mellin transform of a function $f:(0,\infty)\rightarrow\mathbb{C}$ then 
\begin{eqnarray}  \label{e:Mell2}
\int_0^\infty x^\lambda f(ax^b)x^{s-1}dx={\frac{1}{b}}a^{-{\frac{s+\lambda}{b%
}}}\mathcal{M}\left({\frac{s+\lambda}{b}}\right),\quad a>0,\;b\ne0.
\end{eqnarray}

Using the Riordan fundamental property, equation \eref{e:GF} can be written in terms of the  exponential Riordan matrix:
\begin{eqnarray}\label{e:q_n}
\sum_{n=0}^{\infty}q_{n}(s){z^n\over n!}&=&(1-z)^{p+s}(1+z)^{p^{*}-s}\nonumber\\
&=&\left<(1+z)^{p^*},z\right>(1+z)^{-s}(1-z)^{p+s}\\
&\stackrel{\eqref{e:Meix}}{=}&\left<(1+z)^{p^*},z\right> \sum_{n=0}^{\infty}\mathfrak{M}_n(-s;-p,-1){\frac{z^n}{n!}}.\nonumber
\end{eqnarray}
If we denote the Appell sequence for the pair $((1+z)^{p^*},z)$ by $\{a_n(s)\}$ and write $a_n(s)=\sum_{k=0}^n \Delta_{n,k}^{*}s^k$, then 
$$
\Delta_{n,k}^{*}={n\choose k}(p^*)_{n-k},
$$ where $(x)_n:=x(x-1)\cdots(x-n+1)$ is the falling factorial of degree $n$ with $(0)_0=1$. Consequently, by equation \eref{e:q_n}, the polynomials $q_n(s)$ can be expressed
in terms of the Meixner polynomials as follows: 
\begin{eqnarray}  \label{e:pform}
q_n(s)=\sum_{k=0}^n\Delta_{n,k}^{*}\;\mathfrak{M}_k\left(-s;-p,-1\right), \qquad n \geq 0.
\end{eqnarray}

Now, consider the {\it umbral composition} (see \cite{BRoman}) of the Appel sequence $\{a_n(s)\}$ with the Laguerre polynomial sequence $L(s)=\{n!L_n^{(-p-1)}(s)\}$ defined by  
\begin{eqnarray}\label{e:composition}
a_n(L(s))=\sum_{k=0}^n \Delta_{n,k}^{*}\;k!L_k^{(-p-1)}(s).
\end{eqnarray}
Since $\{L(s)\}$ is the Sheffer sequence for the pair $((1-z)^{p},{z\over z-1})$, from equation \eref{e:Laguerre} we see that $a_n(L(s))$ is the Sheffer sequence for the pair $((1+z)^{p^*}(1-z)^{p},{z\over z-1})$. For the remaining of this section, in order to shorten notation, we let
$$
\Phi_n(s):=a_n(L(s))e^{-s/2}=\sum_{k=0}^n {n\choose k}(p^*)_{n-k}\;k!L_k^{(-p-1)}(s)e^{-s/2}.
$$

\begin{thm}
\label{fMellin} Let $\{q_n(s)\}$ be as equation \eqref{e:Lagu}, $\Phi_n(s)$ be as above, and let $\Gamma$ be the Gamma function. Then for $n \geq 0$,
\begin{eqnarray}  \label{e:PhiMellin}
\int_0^\infty \Phi_n(2\pi x)x^{s-1}dx=q_n(s) \pi^{-s}\Gamma(s).
\end{eqnarray}
\end{thm}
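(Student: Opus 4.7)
The plan is to reduce \eqref{e:PhiMellin} to equation \eqref{e:LMellin} with a specific choice of parameters, by first expanding $\Phi_{n}$ via linearity, then pulling out a $\pi^{-s}$ by a Mellin-type rescaling, and finally recognizing the resulting sum through \eqref{e:pform}. The key observation is that the factor $e^{-s/2}$ in $\Phi_{n}(s)$ combined with the argument $2\pi x$ is exactly what is needed to produce, after the substitution $u=\pi x$, an integrand of the form $k!\,L_{k}^{(b-1)}((1-c^{-1})u)e^{-u}$ with $b=-p$ and $c=-1$.

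Concretely, I would first write
\[
\Phi_{n}(2\pi x)=\sum_{k=0}^{n}\binom{n}{k}(p^{*})_{n-k}\,k!\,L_{k}^{(-p-1)}(2\pi x)\,e^{-\pi x},
\]
and interchange the finite sum with the integral in \eqref{e:PhiMellin} so that the problem reduces to evaluating
\[
I_{k}(s):=\int_{0}^{\infty}k!\,L_{k}^{(-p-1)}(2\pi x)\,e^{-\pi x}\,x^{s-1}\,dx
\]
for each $k$. Applying the change of variables $u=\pi x$ (this is exactly the scaling identity \eqref{e:Mell2} with $a=\pi,\,b=1,\,\lambda=0$ applied to $g(x):=k!\,L_{k}^{(-p-1)}(2x)e^{-x}$) yields
\[
I_{k}(s)=\pi^{-s}\int_{0}^{\infty}k!\,L_{k}^{(-p-1)}(2u)\,e^{-u}\,u^{s-1}\,du.
\]

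Next I would match the remaining integral to \eqref{e:LMellin}: choose $b=-p$ (so that $b-1=-p-1$ matches the Laguerre index) and $c=-1$ (so that $1-c^{-1}=2$ matches the argument). Equation \eqref{e:LMellin} then gives
\[
I_{k}(s)=\pi^{-s}\,\Gamma(s)\,\mathfrak{M}_{k}(-s;-p,-1).
\]
Summing over $k$ and using $\Delta_{n,k}^{*}=\binom{n}{k}(p^{*})_{n-k}$,
\[
\int_{0}^{\infty}\Phi_{n}(2\pi x)\,x^{s-1}\,dx=\pi^{-s}\Gamma(s)\sum_{k=0}^{n}\Delta_{n,k}^{*}\,\mathfrak{M}_{k}(-s;-p,-1),
\]
and the inner sum is precisely $q_{n}(s)$ by \eqref{e:pform}, which finishes the proof.

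There is no serious obstacle beyond the bookkeeping: the identification of the parameters $b=-p$, $c=-1$ against the input $2\pi x$ of $\Phi_{n}$ is the only substantive step, and once that is made the remainder is linearity and a change of variables. The only subtlety worth flagging is the domain of convergence in $s$, which I would handle by noting that the integrand decays exponentially at $\infty$ and that $L_{k}^{(-p-1)}$ is polynomial, so the integral converges absolutely for $\Re(s)>0$, where \eqref{e:LMellin} is valid; the resulting identity then extends to all $s$ for which both sides are meromorphic by analytic continuation.
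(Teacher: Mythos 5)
Your proof is correct and follows essentially the same route as the paper: expand $\Phi_n$ by linearity, reduce to the single-$k$ Mellin integral, apply the scaling identity \eqref{e:Mell2} with $a=\pi$, $b=1$, $\lambda=0$, recognize \eqref{e:LMellin} with the parameter choice $b=-p$, $c=-1$, and resum via \eqref{e:pform}. The only additions are your explicit parameter bookkeeping and the closing remark on the domain of convergence, which the paper leaves implicit.
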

\begin{proof} For $k \geq 0$, let 
\begin{eqnarray*}
f_k(x)=k!L_k^{\left(-p-1\right)}(2x)e^{-x}.
\end{eqnarray*}
Then for $n \geq 0$, we have $\Phi_n(2\pi x)=\sum_{k=0}^n \Delta^*_{n,k}\;f_k(\pi x)$.
By equation (\ref{e:LMellin}), the Mellin transform of $f_k(x)$ is
given by 
\begin{eqnarray*}
\mathcal{M}(s):=\int_0^\infty f_k(x)x^{s-1}dx= {\Gamma}(s)\mathfrak{M}_k\left(-s;-p,-1\right).
\end{eqnarray*}
Taking $\lambda=0$, $a=\pi$ and $b=1$ in (\ref{e:Mell2}) yields 
\begin{eqnarray}  \label{e:eqn}
\int_0^\infty f_k(\pi x)x^{s-1}dx=\pi^{-s}\mathcal{M}(s) =\pi^{-s}{\Gamma}(s)\mathfrak{M}_k\left(-s;-p,-1\right).
\end{eqnarray}
Multiplying both sides of (\ref{e:eqn}) by $\Delta^*_{n,k}$ and summing over $0 \leq k \leq n$ we obtain  
\begin{eqnarray*}
\int_0^\infty \left(\sum_{k=0}^n \Delta^*_{n,k}f_k(\pi x)\right)
x^{s-1}dx=\pi^{-s}\Gamma(s)\sum_{k=0}^n \Delta^*_{n,k}\mathfrak{M}_k\left(-s;-p,-1\right).
\end{eqnarray*}
Using equation \eref{e:pform} we arrive at
$$
\int_0^\infty \Phi_n(2\pi x)x^{s-1}dx=q_n(s)\pi^{-s}\Gamma(s),
$$
as claimed.
\end{proof}

\begin{rem} If we set $\lambda=0$, $a=\pi$ and $b=2$ in (\ref{e:Mell2}), we obtain the Mellin transform of $\Phi_n(2\pi x^2)$ as follows:
\begin{eqnarray}\label{e:x2}
\int_0^\infty \Phi_n(2\pi x^2)x^{s-1}dx=\frac{1}{2}q_n({s/2})\pi^{-{s/2}}\Gamma({s/2}).
\end{eqnarray}
\end{rem}
The next (and final) theorem of the section connects the polynomials $\{q_{n}(s/2)\}$ to the Riemann zeta function $\zeta(s)$ and scaled Mellin transforms of the functions $\Phi_n(2\pi x^2)$, $n \geq 0$. In order to set up the statement of the result, let $\Phi_n^*(x):=\Phi_n(2\pi x^2)$, and define $\psi_j^*(x)$ by
\begin{eqnarray}  \label{e:psiphi-1}
\psi_j^*(x)=\sum_{n=1}^\infty \Phi_j^{*}(n\sqrt{x}).
\end{eqnarray}
%where  
%\begin{eqnarray*}\label{Phi}
%\Phi_j^*(n\sqrt{x})=\sum_{k=0}^j\Delta_{j,k}^{*}\;k!L_k^{(-p-1)}(2\pi n^2 x)e^{-\pi n^2 x}.
%\end{eqnarray*}

\begin{thm}\label{thm:mellinzeta} Let $\psi_j^*(x)$ be as in equation \eqref{e:psiphi-1}, $\{q_n(s)\}$ be as in equation \eqref{e:Lagu} and $\zeta(s)$ be the Riemann zeta function. Then for $j \geq 0$, the following equality holds:
\begin{eqnarray}  \label{e:left}
\int_0^\infty\psi^*_j(x)x^{\frac{s}{2}-1}dx=q_j({s/2})\pi^{-{s/2}}\Gamma({s/2})\zeta(s).
\end{eqnarray}
\end{thm}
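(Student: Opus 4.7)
The plan is to reduce the identity to an interchange of summation and Mellin integration, and then recognize two ingredients: the Mellin transform of $\Phi_j^*$ already computed in equation \eqref{e:x2}, and the Dirichlet series defining $\zeta(s)$. More precisely, starting from the definition $\psi_j^*(x)=\sum_{n=1}^\infty \Phi_j^*(n\sqrt{x})$, I would write
\[
\int_0^\infty \psi_j^*(x)\, x^{s/2-1}\,dx \;=\; \sum_{n=1}^\infty \int_0^\infty \Phi_j^*(n\sqrt{x})\, x^{s/2-1}\,dx,
\]
assuming $\Re(s)$ is in a range where both the inner Mellin transform and the sum converge absolutely, so that Fubini/Tonelli applies.

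Next I would change variables $u=\sqrt{x}$ (so $x=u^2$, $dx=2u\,du$) and then $v=nu$ in each summand. The first substitution gives $\int_0^\infty \Phi_j^*(nu)\, u^{s-1}\cdot 2\,du$, and the second yields $2n^{-s}\int_0^\infty \Phi_j^*(v)\, v^{s-1}\,dv$. Summing over $n\ge 1$ produces the factor $\zeta(s)$, multiplied by twice the Mellin transform of $\Phi_j^*$.

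At this point I invoke equation \eqref{e:x2}, which says
\[
\int_0^\infty \Phi_j^*(v)\, v^{s-1}\,dv \;=\; \int_0^\infty \Phi_j(2\pi v^2)\, v^{s-1}\,dv \;=\; \tfrac{1}{2}\,q_j(s/2)\,\pi^{-s/2}\,\Gamma(s/2).
\]
Assembling the pieces, the factor $2$ from the change of variable cancels the $1/2$, leaving exactly $q_j(s/2)\,\pi^{-s/2}\,\Gamma(s/2)\,\zeta(s)$, as desired.

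The only genuinely nontrivial point is the justification of the interchange of summation and integration, which requires controlling the growth of $\Phi_j^*(x)=\Phi_j(2\pi x^2)$. Since $\Phi_j(y)=a_j(L(y))e^{-y/2}$ is a polynomial in $y$ times $e^{-y/2}$, the rescaled function $\Phi_j^*(x)$ has Gaussian decay $\sim e^{-\pi x^2}$ at infinity and polynomial behavior near $0$, so the tail of the double series/integral converges uniformly on compact subsets of a right half-plane (in fact, for all $s$ by Gaussian decay at $\infty$ and for $\Re s>1$ to make the $\sum n^{-s}$ absolutely convergent). That convergence regime secures the Fubini step, after which the identity extends by analytic continuation to the natural domain of the right-hand side.
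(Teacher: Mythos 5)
Your argument is correct and is essentially the same as the paper's: the substitution $y=n\sqrt{x}$ used there is just your two changes of variable $u=\sqrt{x}$, $v=nu$ composed, and both proofs then invoke equation \eqref{e:x2} to finish. The one thing you add is an explicit justification of the Fubini/Tonelli interchange via the Gaussian decay of $\Phi_j^*$, which the paper leaves implicit; that is a welcome addition but not a different route.
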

\begin{proof} 
 Substituting $y=n\sqrt{x}$ and using equation \eref{e:psiphi-1} we obtain 
\begin{eqnarray*}
\int_0^\infty \psi_j^{*}(x)x^{{\frac{s}{2}}-1}dx
&=&\int_0^\infty\sum_{n=1}^\infty\Phi_j^{*}(y) \left({\frac{y}{n}}\right)^{s-1}{\frac{2}{n}}dy \\
&=&2\left(\sum_{n=1}^\infty{\frac{1}{n^s}}\right)\int_0^\infty\Phi_j^*(y)y^{s-1}dy.
\end{eqnarray*}
Since $\Phi_j^{*}(y)=\Phi_n(2\pi y^2)$, equation \eref{e:x2} implies that 
\begin{eqnarray*}  \label{e:left}
\int_0^\infty\psi_j^{*}(x)x^{{\frac{s}{2}}-1}dx=q_n({s/2})\pi^{-{s/2}}\Gamma({s/2})\zeta(s),
\end{eqnarray*}
as required.
\end{proof}

\section{The Proof of Theorem \ref{thm:pn_generalization}} \label{sec:proofmainThm}

%In the previous section, we have proved that the zeros of %$p_{n}^{(m,\alpha,\beta)}(s)$
%generated by 
%\[
%\sum_{n=0}^{\infty}p_{n}^{(m,\alpha,\beta)}(s)\frac{z^{n}}%{n!}=\left(1+\frac{\alpha z}{2\pi}\right)^{\frac{s-m-\beta}{2\alpha}}\left(1-%\frac{\alpha z}{2\pi}\right)^{-\frac{s+\beta}{2\alpha}}
%\]
%lie on the vertical line $\Re s=\frac{m}{2}$ if $\alpha\ne0$ and
%\[
%\alpha\left(\frac{m}{2}+\beta\right)\ge0.
%\]
%This result is a special case of the theorem below. 
%\begin{thm}
%\label{thm:pn_generalization}For $|\alpha_{0}|<|\alpha_{1}|
%<\cdots<|\alpha_{N}|$,
%$\alpha_{i}\in\mathbb{R}$, and $p_{i}\in\mathbb{R}$, let the sequence
%$h_{n}(s)$ be generated by 
%\[
%\sum_{n=0}^{\infty}h_{n}(s)\frac{z^{n}}{n!}=(1-\alpha_{0}z)^{p+s}%(1+\alpha_{0}z)^{p^{*}-s}\prod_{i=1}^{N}(1-\alpha_{i}^{2}z^{2})^{p_{i}}.
%\]
%If $p^{*}+p\le0$, then all the zeros of $h_{n}(s)$ lie on the vertical
%line $\Re s=(p^{*}-p)/2=:c$. If $p^{*}+p>0$, then the same conclusion
%holds except for $2\lceil c+p\rceil$ zeros. 
%\end{thm}
We now turn our attention to the proof of Theorem \ref{thm:pn_generalization}. The overall structure of the proof is as follows. We start with an integral representation (on a small circle around the origin) of the polynomials of interest, and demonstrate that this integral can be computed by integrating over a Hankel contour instead of the circle. We further deform this countour into one over which the integrand has analytically desirable properties (c.f Lemma \ref{lem:zycurve}), and use the saddle point method to develop asymptotic expressions for the integral $p_n(t)$ over various ranges of $t$: (i) for when $e^{-\ln^4 n} /n \ll t =\mathcal{O}(\ln^4 n/n)$ (c.f. Lemma \ref{lem:asympsmallt}) , and (ii) for when $\ln^4 n/n \ll t <1$ (c.f. Lemma \ref{lem:globalasymp}). Having obtained estimates that are uniform in $t$, we demonstrate that the the integral over a small, `center' section of the contour dominates the integral on the tails of the contour. Finally, we  employ (and refer to) methods we have utilized in \cite{cft} to establish the claimed location of the zeros of our polynomial sequence (c.f. Lemmas \ref{lem:changeargglobal} and \ref{lem:changeargsmallt}).
\newline
%%%%%%%%%%%%%%%%%%%%%%%%%%%%%%%%%%%%%%%%%%
\indent Consider the notation and setup of Theorem \ref{thm:pn_generalization}. The conclusions of the theorem are trivially true when $\alpha_0=0$. Henceforth we consider $\alpha_0 \neq 0$ -  in fact, using the substitution $z$ by $z/\alpha_{0}$ (which has no effect on the location of the zeros of the generated sequence), it suffices to consider the case
$\alpha_{0}=1$. From the Cauchy differentiation
formula we obtain
\begin{align*}
h_{n}(c-int) & =\frac{1}{2\pi i}\oint_{|z|=\epsilon}\frac{(1-z)^{p+c-int}(1+z)^{p+c+int}\prod_{i=1}^{N}(1-\alpha_{i}^{2}z^{2})^{p_{i}}}{z^{n+1}}dz\\
 & =\frac{1}{2\pi i}\oint_{|z|=\epsilon}\psi(z,t)e^{n\phi(z,t)}dz,
\end{align*}
where 
\begin{align}
\phi(z,t) & =it\Log(1+z)-it\Log(1-z)-\Log z,\label{eq:phidef}\\
\psi(z) & =z^{-1}\prod_{k=0}^{N}(1-\alpha_{i}z^{2})^{p_{i}}, \qquad \textrm{and}\label{eq:psidef}\\
p_{0} & =p+c.\nonumber  
\end{align}
We note that for large $n$, 
\[
\lim_{R\rightarrow\infty}\int_{C_{R}}\psi(z,t)e^{n\phi(z,t)}dz=0,
\]
where $C_{R}$ is any circular arc radius $R$. Thus 
\begin{equation}
h_{n}(c-int)=\frac{1}{2\pi i}\oint_{\Gamma_{1}\cup\Gamma_{2}}\psi(z,t)e^{n\phi(z,t)}dz\label{eq:contourdeformed}
\end{equation}
where $\Gamma_{1}$ and $\Gamma_{2}$ are two loops around the cuts
$(-\infty,-1]$ and $[1,\infty)$ (see Figure \ref{fig:contour}). 

\begin{figure}
\begin{centering}
\includegraphics[scale=0.3]{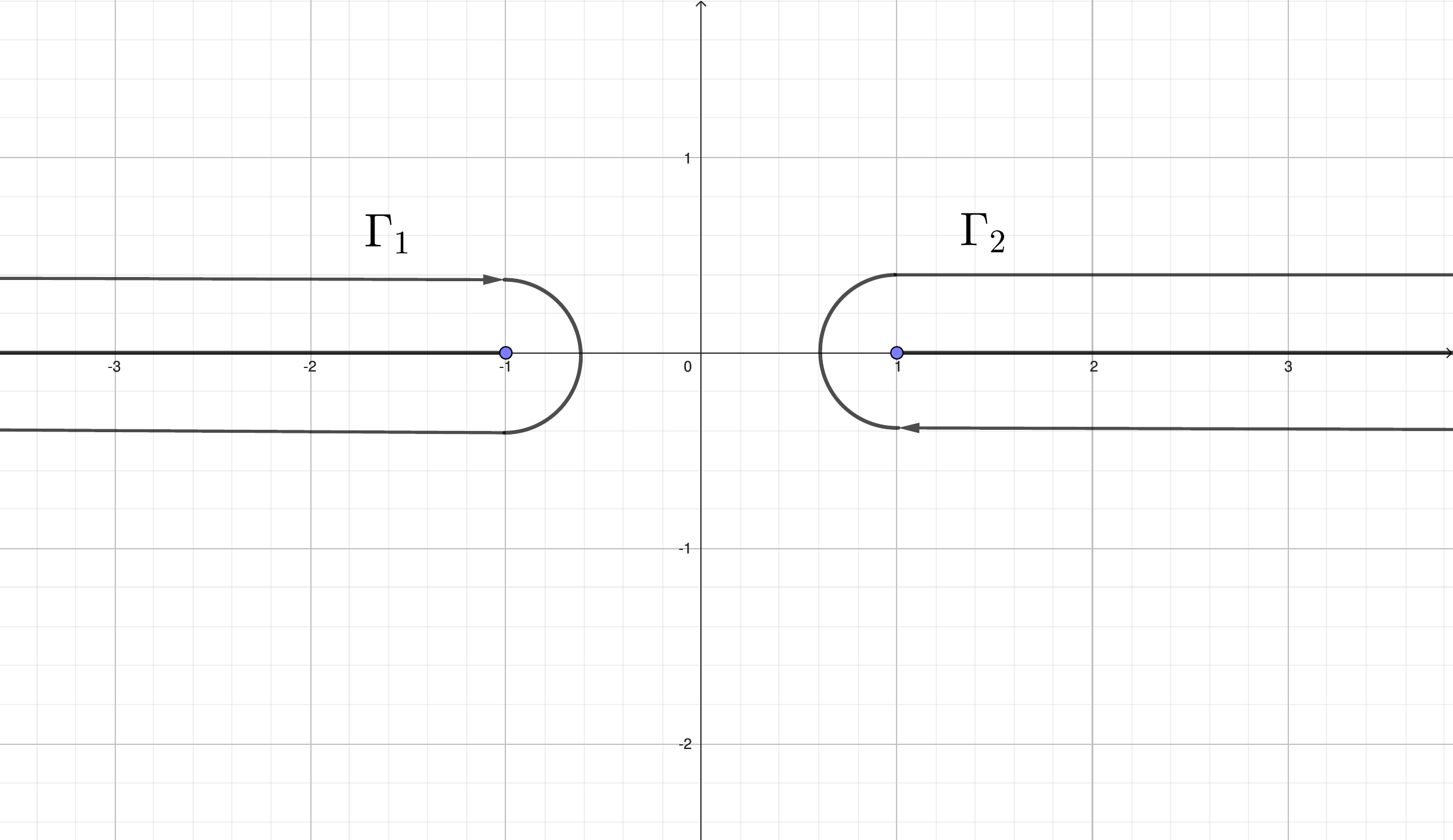}
\par\end{centering}
\caption{\label{fig:contour}$\Gamma_{1}$ and $\Gamma_{2}$ curves}

\end{figure}

Since 
\[
(1-z)^{p+c}(1+z)^{p+c}\prod_{k=1}^{N}(1-\alpha_{i}^{2}z^{2})^{p_{i}}
\]
is an even function, we see using the substitution $z\rightarrow-z$
that 
\[
\int_{\Gamma_{1}}\psi(z,t)e^{n\phi(z,t)}dz(-1)^{n}=\int_{\Gamma_{2}}\psi(z)e^{n\phi(z,-t)}dz.
\]
On the other hand, the substitution $z\rightarrow\overline{z}$ yields
\[
\int_{\Gamma_{2}}\psi(z)e^{n\phi(z,-t)}dz=\overline{(-1)^{n+1}\int_{\Gamma_{2}}\psi(z)e^{n\phi(z,t)}dz}.
\]
We deduce from (\ref{eq:contourdeformed}) that $\pi h_{n}(c-int)$
is either the imaginary part, or $-i$ times the real part of 
\begin{equation} \label{eq:pn}
p_{n}(t)=\int_{\Gamma_{2}}\psi(z)e^{n\phi(z,t)}dz,
\end{equation}
depending of whether $n$ is even or odd.

We now commence the analysis of the asymptotic behavior of $p_{n}(t)$ as $n\rightarrow\infty$ using the saddle point method. Observe that the two solutions in $z$ of 
\begin{equation} \label{eq:phiz}
\phi_{z}(z,t)=-\frac{1}{z}+\frac{it}{1-z}+\frac{it}{1+z}=0
\end{equation}
are 
\[
\zeta_{\pm}=-it\pm\sqrt{1-t^{2}.}
\]
Clearly, for each $t\in(0,1)$, $\zeta:=\zeta_{+}$ lies on the
unit circle in the fourth quadrant. 
\begin{lem}
\label{lem:phi2zeta} Let $\phi(z,t)$ be defined as in equation \eqref{eq:phidef}. Then for each $t\in(0,1)$, 
\[
\Re\phi_{z^{2}}(\zeta(t),t)>0.
\]
Furthermore, $\Im\phi_{z^{2}}(\zeta(t),t)\le0$ if $t\in(0,1/\sqrt{2}]$
and $\Im\phi_{z^{2}}(\zeta(t),t)>0$ for $t\in(1/\sqrt{2},1)$. 
\end{lem}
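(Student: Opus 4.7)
The plan is essentially a direct computation, made tractable by the key observation that $\zeta(t)$ lies on the unit circle: since $|\zeta(t)|^{2}=t^{2}+(1-t^{2})=1$ and $\zeta(t)$ sits in the fourth quadrant for $t\in(0,1)$, one may write $\zeta(t)=e^{-i\theta}$ with $\theta\in(0,\pi/2)$, $\sin\theta=t$, $\cos\theta=\sqrt{1-t^{2}}$. I would carry out the evaluation of $\phi_{z^{2}}(\zeta(t),t)$ in this parametrization, since it converts each of the three terms of $\phi_{zz}$ into a simple trigonometric expression.

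First I would differentiate \eqref{eq:phidef} once more to obtain
\[
\phi_{z^{2}}(z,t)=\frac{1}{z^{2}}-\frac{it}{(1+z)^{2}}+\frac{it}{(1-z)^{2}}.
\]
Substituting $z=e^{-i\theta}$ and using the polar identities $1+e^{-i\theta}=2\cos(\theta/2)e^{-i\theta/2}$ and $1-e^{-i\theta}=2i\sin(\theta/2)e^{-i\theta/2}$, the second and third terms combine into
\[
-it e^{i\theta}\left(\frac{1}{4\cos^{2}(\theta/2)}+\frac{1}{4\sin^{2}(\theta/2)}\right),
\]
which collapses via the half-angle identity $4\sin^{2}(\theta/2)\cos^{2}(\theta/2)=\sin^{2}\theta=t^{2}$. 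Adding the contribution $1/\zeta^{2}=e^{2i\theta}$ from the first term produces the clean closed form
\[
\phi_{z^{2}}(\zeta(t),t)=e^{i\theta}\left(e^{i\theta}-\frac{i}{t}\right).
\]

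To conclude, I would expand this product using $e^{i\theta}=\sqrt{1-t^{2}}+it$ and separate into real and imaginary parts to obtain
\[
\Re\,\phi_{z^{2}}(\zeta(t),t)=2(1-t^{2}),\qquad \Im\,\phi_{z^{2}}(\zeta(t),t)=\frac{\sqrt{1-t^{2}}}{t}\,(2t^{2}-1).
\]
Both assertions of the lemma then follow immediately: the real part is strictly positive throughout $(0,1)$, and the sign of the imaginary part is governed entirely by $2t^{2}-1$, giving $\Im\,\phi_{z^{2}}(\zeta(t),t)\le 0$ precisely when $t\in(0,1/\sqrt{2}]$ and $>0$ for $t\in(1/\sqrt{2},1)$. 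There is no substantial obstacle here; the lemma reduces to bookkeeping once the unit-circle parametrization is adopted, and the only places requiring care are tracking the signs in the polar forms of $1\pm e^{-i\theta}$ and applying the half-angle identity to collapse the trigonometric denominators.
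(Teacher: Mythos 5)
Your proof is correct and takes essentially the same approach as the paper --- a direct evaluation of $\phi_{z^{2}}$ at the saddle point $\zeta$ --- though your polar parametrization $\zeta=e^{-i\theta}$ with $\sin\theta=t$ organizes the algebra more cleanly: the closed form $e^{i\theta}\bigl(e^{i\theta}-i/t\bigr)$ falls out at once, and your final expressions $\Re\phi_{z^{2}}=2(1-t^{2})$ and $\Im\phi_{z^{2}}=\sqrt{1-t^{2}}(2t^{2}-1)/t$ agree with the paper's after one notes $\bigl(\sqrt{1-t^{2}}-1\bigr)^{2}\bigl(\sqrt{1-t^{2}}+1\bigr)^{2}=t^{4}$.
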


\begin{proof}
Given the definition of $\phi(z,t)$, one readily calculates
\[
\phi_{z^{2}}(\zeta,t)=\frac{it}{\left(-\sqrt{1-t^{2}}+it+1\right)^{2}}-\frac{it}{\left(\sqrt{1-t^{2}}-it+1\right)^{2}}+\frac{1}{\left(\sqrt{1-t^{2}}-it\right)^{2}},
\]
along with
\begin{align*}
\Re\phi_{z^{2}}(\zeta,t) & =\frac{2(1-t)t^{4}(t+1)}{\left(\sqrt{1-t^{2}}-1\right)^{2}\left(\sqrt{1-t^{2}}+1\right)^{2}}, \qquad \textrm{and} \\
\Im\phi_{z^{2}}(\zeta,t) & =\frac{t^{3}\sqrt{1-t^{2}}\left(2t^{2}-1\right)}{\left(\sqrt{1-t^{2}}-1\right)^{2}\left(\sqrt{1-t^{2}}+1\right)^{2}}.
\end{align*}
The result is now immediate.
\end{proof}
We now want to deform the contour of integration $\Gamma_2$ into a contour through $\zeta$. An example of such a contour is given graphically in \cite{paris} for example, and Figure \ref{fig:deformed} illustrates a particluar instance of the curve we use (for the value of $t=0.3$). In the following theorem we rigorously justify the existence of such a curve. 
\begin{lem}
\label{lem:zycurve}For each $t\in(0,1)$, there exist $L>0$, and
a function $z(y)$ analytic in a neighborhood of $(-\infty,L)$ such
that 
\begin{itemize}
\item $z(0)=\zeta$, 
\item $z(L)\in(0,1)$, 
\item $z(y)$, $y\in(-\infty,L),$ lies in the fourth quadrant, 
\item and 
\begin{equation}
-y^{2}=\phi\left(z(y),t\right)-\phi\left(\zeta,t\right),\qquad\forall y\in(-\infty,L).\label{eq:zyfunc}
\end{equation}
\end{itemize}
\end{lem}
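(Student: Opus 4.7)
The plan is to construct $z(y)$ as a steepest-descent path of $\phi(\cdot,t)$ through the saddle point $\zeta$, parametrized so that the defining identity $\phi(z(y),t)-\phi(\zeta,t)=-y^2$ holds, and to realize it in two stages: a local analytic germ at $\zeta$, followed by a global extension by analytic continuation along a level curve of $\Im\phi(\cdot,t)$.

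For the local part, since $\phi_z(\zeta,t)=0$ and $\phi_{z^2}(\zeta,t)\ne 0$ by Lemma~\ref{lem:phi2zeta}, we have $\phi(z,t)-\phi(\zeta,t)=\tfrac{1}{2}\phi_{z^2}(\zeta,t)(z-\zeta)^2+O((z-\zeta)^3)$. Taking a holomorphic square root and inverting (i.e., applying the Morse lemma / inverse function theorem to $w=\sqrt{\phi(\zeta,t)-\phi(z,t)}$) produces an analytic function $z(y)$ near $y=0$ with $z(0)=\zeta$ and tangent $z'(0)=\pm\sqrt{-2/\phi_{z^2}(\zeta,t)}$. The two choices of sign correspond to the two steepest-descent directions, which differ by $\pi$; using the sign information for $\Re\phi_{z^2}(\zeta,t)$ and $\Im\phi_{z^2}(\zeta,t)$ from Lemma~\ref{lem:phi2zeta}, exactly one of them points into the open fourth quadrant, and that is the branch I select.

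To globalize, I would extend $z(y)$ by analytic continuation, using the implicit function theorem applied to $F(z,y)=\phi(z,t)-\phi(\zeta,t)+y^2$; the continuation is analytic wherever $\phi_z(z(y),t)\ne 0$, that is, away from the other saddle $\zeta_-=-\sqrt{1-t^2}-it$ (which lies in the third quadrant and so cannot be reached without first leaving the fourth quadrant) and from the branch points $z=0,\pm 1$. On the $y>0$ side, a direct computation on $(0,1)$ gives $\Im\phi(x,t)=t\ln\frac{1+x}{1-x}$, which is continuous and strictly increasing from $0$ to $+\infty$, while $\Im\phi(\zeta,t)$ is a fixed positive real number; the intermediate value theorem then yields a unique $x^{*}\in(0,1)$ with $\Im\phi(x^{*},t)=\Im\phi(\zeta,t)$. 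Because $\frac{d}{dy}\Re\phi(z(y),t)=-2y$ keeps $\Re\phi$ strictly decreasing along the forward arc, the quantity $L:=\sqrt{\phi(\zeta,t)-\phi(x^{*},t)}$ is a well-defined positive real number with $z(L)=x^{*}$. On the $y<0$ side, $\Re\phi(z(y),t)=\Re\phi(\zeta,t)-y^2\to -\infty$; since $\Re\phi(z,t)\sim -\ln|z|$ at infinity, the curve must escape to infinity, so $z(y)$ is defined for all $y\in(-\infty,0)$.

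The main obstacle is verifying that the analytic continuation stays inside the (closed) fourth quadrant throughout and does not re-enter the interior of the level set of some other branch. I would address this via a boundary analysis of $\Im\phi(\cdot,t)$ along the positive imaginary axis, the negative imaginary axis away from $\zeta$, the positive real axis outside $(0,1)$, and the cut $[1,\infty)$, showing that the level value $\Im\phi(\zeta,t)$ is never attained on those boundaries except at the point $x^{*}$. Combined with the nondegeneracy of the level set of $\Im\phi(\cdot,t)$ away from its critical and singular points, this confines $z(y)$ to the fourth quadrant and forces it to exit precisely as claimed.
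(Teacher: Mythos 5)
Your overall strategy---a local Morse coordinate at the saddle $\zeta$, followed by analytic continuation of the level curve $\Im\phi=\Im\phi(\zeta,t)$ and a boundary analysis to confine it to the fourth quadrant---is the same one the paper uses, and the local germ step is correct. However, you acknowledge that ``the main obstacle is verifying that the analytic continuation stays inside the (closed) fourth quadrant'' and then only gesture at it, and your proposed way of carrying it out contains a claim that is actually false. You assert that the level value $\Im\phi(\zeta,t)$ is ``never attained on those boundaries except at the point $x^*$.'' This is not so: the paper shows that for every $t\in(0,1)$ there is exactly one $z^*\in(1,\infty)$ on the branch cut with $\Im\bigl(\phi(z^*,t)-\phi(\zeta,t)\bigr)=0$. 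An imaginary-part boundary analysis alone therefore cannot rule out the negative-$y$ branch escaping through $(1,\infty)$ at $z^*$; the paper excludes it by a \emph{real-part} computation, namely $\Re\bigl(\phi(z^*,t)-\phi(\zeta,t)\bigr)=t\pi/2-\ln z^*>0$, which is incompatible with $\Re\phi(z(y),t)-\Re\phi(\zeta,t)=-y^2\le0$ on the curve. Your remark that $\Re\phi\sim-\ln|z|$ forces escape to infinity does not address this alternative exit.

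There is a second gap on the positive-$y$ side: you need the curve to stay inside the unit disk (so it cannot cross out and reach the level set on $[1,\infty)$), and your boundary list never mentions the unit circle. The paper proves $z(y)\notin\{|z|=1\}$ for $y\in(0,L]$ via a separate argument that uses the strict monotonicity of $\Im\phi(\zeta(t),t)$ in $t$ together with a Rolle/Mean-Value argument applied to $\theta\mapsto\Im\phi(e^{i\theta},t)$. A more minor issue of organization: you define $L:=\sqrt{\phi(\zeta,t)-\phi(x^*,t)}$ as if the positivity of $\Re\phi(\zeta,t)-\Re\phi(x^*,t)$ were already known; in fact this is a conclusion of the continuation argument (once one knows the curve exits at $x^*$ for some $y=L>0$), not an a priori definition.
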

\begin{figure}[htbp]
\begin{center}
\includegraphics[width= 2.5in]{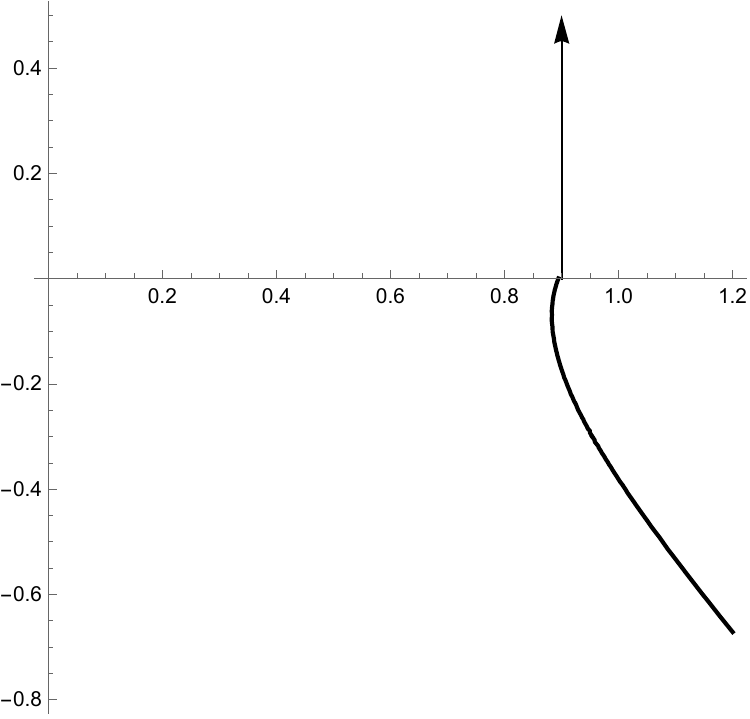}
\caption{The new contour of integration: the $z(y)$ curve along with a vertcial line replacing $\Gamma_2$}
\label{fig:deformed}
\end{center}
\end{figure}

\begin{proof}
By the definition of $\zeta$, expanding $\phi(z,t)$ about $z=\zeta$ yields
\[
\phi\left(z,t\right)-\phi\left(\zeta,t\right)=\frac{\phi_{z^{2}}(\zeta,t)}{2}(z-\zeta)^{2}(1+b(z)),
\]
where $b(z)$ is analytic in an open neighborhood (depending on $t$)
of $\zeta$ and $|b(z)|<1$. Lemma \ref{lem:phi2zeta} implies that $\phi_{z^2}(\zeta,t)$ is zero free on $t \in (0,1)$, as is $1+b(z)$. Thus they have analytic square roots (using the principal cut) and we may invert the relation 
\begin{equation}
y=\frac{\sqrt{\phi_{z^{2}}(\zeta,t)}}{\sqrt{2}i}(z-\zeta)\sqrt{1+b(z)}\label{eq:localcurve}
\end{equation}
to obtain $z(y)$ analytic in an small open neighborhood of $0$ such
that $z(0)=\zeta$ and \eqref{eq:zyfunc} holds for all $y$ in some
neighborhood of $0$. The equation above implies that 
\begin{equation}
z=\zeta+y\frac{\sqrt{2}i}{\sqrt{\phi_{z^{2}}(\zeta,t)}}+o(y)\label{eq:zsmally}.
\end{equation}
Given that $|\zeta|=1$, we see that for small $y$ 
\begin{equation}
|z|^2=1+2y\left(\Re\zeta\Re\left(\frac{\sqrt{2}i}{\sqrt{\phi_{z^{2}}(\zeta,t)}}\right)+\Im\zeta\Im\left(\frac{\sqrt{2}i}{\sqrt{\phi_{z^{2}}(\zeta,t)}}\right)\right)+o(y).\label{eq:zymod}
\end{equation}
Lemma \ref{lem:phi2zeta} implies that 
\[
\Arg\left(\frac{\sqrt{2}i}{\sqrt{\phi_{z^{2}}(\zeta,t)}}\right)\in\begin{cases}
(\pi/4,\pi/2) & \text{ if }t\in(1/\sqrt{2},1)\\{}
[\pi/2,3\pi/4) & \text{ if }t\in(0,1/\sqrt{2}]
\end{cases}.
\]
Consequently, if $t\in(0,1/\sqrt{2})$ then 
\begin{eqnarray*}
\Re \left( \frac{\sqrt{2}i}{\sqrt{\phi_{z^{2}}(\zeta,t)}}\right)&<&0, \qquad \textrm{and}\\
\Im \left( \frac{\sqrt{2}i}{\sqrt{\phi_{z^{2}}(\zeta,t)}}\right)&>&0.
\end{eqnarray*}
In addition, if $t\in(1/\sqrt{2},1)$, then
\begin{equation}
0< \Re \left( \frac{\sqrt{2}i}{\sqrt{\phi_{z^{2}}(\zeta,t)}}\right)<\Im \left( \frac{\sqrt{2}i}{\sqrt{\phi_{z^{2}}(\zeta,t)}}\right), \qquad \textrm{and} \qquad 0< \Re \zeta <-\Im \zeta. \nonumber
\end{equation}
Therefore, for all $t \in (0,1)$, $|z|<1$ for $0<y \ll1$ and $|z|>1$ for negative $y$ with $0< |y|\ll 1$.

We first extend the curve $z(y)$ inside the unit circle. To this
end, let $L$ be the supremum of the set of all positive real numbers
$l\in\mathbb{R}^{+}$ such that $z(y)$ has an analytic continuation
to an open neighborhood of $[0,l)$ and $z([0,l))$ is a subset of
the fourth quadrant. Let $y_{k}\in[0,L)$ be a sequence of real numbers
such that $y_{k}\rightarrow L$ and that $z(y_{k})$ is convergent.
Write $\lim_{k\to\infty}z(y_{k})=z(L)$. It follows from the definition
of $L$ that, $z(L)$ lies on the boundary of the fourth quadrant.
We note that for each fixed $t\in(0,1)$, 
\[
\frac{d\Im\phi(ix,t)}{dx}=\Im\left(-\frac{t}{1+ix}-\frac{t}{1-ix}-\frac{1}{x}\right)=0,\qquad(x\in\mathbb{R}\backslash\{0\}).
\]
Consequently, if $x\in(-\infty,0)$, then 
\[
\Im\phi(ix,t)=\lim_{x\rightarrow0^{-}}\Im\phi(ix,t)=\frac{\pi}{2}.
\]
On the other hand, 
\begin{align*}
\frac{d\Im\phi(\zeta,t)}{dt} & =\Im\left(\phi_{z}(\zeta,t)\frac{d\zeta}{dt}+\phi_{t}(\zeta,t)\right)=\Im\phi_{t}(\zeta,t)\\
 & =\ln\frac{|1+\zeta|}{|1-\zeta|}>0,
\end{align*}
from which we conclude that $\Im\phi(\zeta,t)$ is an increasing function
of $t$ on the interval $(0,1)$. For all $0\leq y\leq L$, \eqref{eq:zyfunc}
implies that for a fixed $t\in(0,1)$ 
\begin{eqnarray*}
\Im\phi(z(y),t) & = & \Im\phi(\zeta,t)\\
 & < & \Im\phi(-i,1)\\
 & = & \ln|1-i|-\ln|1+i|-\left(-\frac{\pi}{2}\right)\\
 & = & \frac{\pi}{2}.
\end{eqnarray*}
It follows that $z(y)$ does not lie on the negative imaginary axis.
We also claim that $z(y)$ is not on the unit circle for any $y\in(0,L]$.
For if it were, we would have a $t'\neq t$ such that $z(y)=\zeta(t')$,
necessitating that $\Im\phi(z(y),t)=\Im\phi(\zeta(t),t)=\Im\phi(\zeta(t'),t)$. Write $\zeta(t)=e^{i\theta}$ and $\zeta(t')=e^{i \theta '}$ for some $\theta, \theta' \in (-\pi/2,0)$. 
Applying the Mean Value Theroem to $\Im \phi(e^{i \theta, t})$ as a function of $\theta$, we conclude that there exists a $(-\pi/2,0) \ni \theta^* \neq \theta, \theta'$ such that
\[
\Im \left(\phi_z(e^{i \theta^*},t) i e^{i \theta^*} \right)=-\csc (\theta^*)(t+\sin \theta^*)=0.
\]
Since $\csc \theta^* \neq 0$ on $(-\pi/2,0)$ we conclude that $t+\sin \theta^*=0$, or equivalently, $\sin \theta=\sin \theta^*$, a contradiction. Since $z(y)$ is continuous, we conclude that $z(L)\in(0,1)$.

Next we extend the curve $z(y)$ outside the unit circle. Set $K$
be the infimum of $l\in\mathbb{R}^{-}$ such that $z(y)$ has an analytic
continuation to an open neighborhood of $(l,0]$ and $z((l,0])$ is
a subset of the fourth quadrant and we define $z(K)$ correspondingly.
Arguments identical to those given above also show that $z(K)\notin-i\mathbb{R}^{+}$,
and that $|z(K)|>1$. Since $z(K)$ must lie in the boundary of the
fourth quadrant, either $z(K)\in(1,+\infty)$, or $z(K)=\infty$. 

Suppose $z(K)\in(1,\infty)$. Note that if \eqref{eq:zyfunc} holds
with real $z>1$ and if $t\in(0,1)$, then 
\begin{eqnarray*}
0 & = & \Im(\phi(z,t)-\phi(\zeta,t))\\
 & = & \Im\left\{ (it\ln(1+z)-it\ln(z-1)+i(-\pi)-\ln z\right.\\
 & - & \left.it\Log(1+\sqrt{1-t^{2}}-it)+it\Log(1-\sqrt{1-t^{2}}+it)+\Log(-it+\sqrt{1-t^{2}})\right\} \\
 & = & t\ln\left(\frac{1+z}{z-1}\sqrt{\frac{1-\sqrt{1-t^{2}}}{1+\sqrt{1-t^{2}}}}\right)+\arcsin(-t).
\end{eqnarray*}
Since 
\[
\frac{d}{dz}\left(\frac{1+z}{z-1}\right)=-\frac{2}{(z-1)^{2}}<0,
\]
\[
\lim_{z\to1^{+}}\frac{1+z}{z-1}=+\infty,\qquad \textrm{and} \qquad \lim_{z\to+\infty}\frac{1+z}{z-1}=1,
\]
we see that $\ln\left(\frac{1+z}{z-1}\sqrt{\frac{1-\sqrt{1-t^{2}}}{1+\sqrt{1-t^{2}}}}\right)$
is strictly monotone with range 
\[
R=\left(\ln\sqrt{\frac{1-\sqrt{1-t^{2}}}{1+\sqrt{1-t^{2}}}}\ ,\ +\infty\right).
\]
Given that $t\in(0,1)$, the left end point of $R$ is negative, and
hence there is exactly one value of $z\in(1,\infty)$ such that 
\begin{equation}
\ln\left(\frac{1+z}{z-1}\sqrt{\frac{1-\sqrt{1-t^{2}}}{1+\sqrt{1-t^{2}}}}\right)=-\frac{\arcsin(-t)}{t},\label{eq:z>1}
\end{equation}
or equivalently, 
\[
\Im(\phi(z,t)-\phi(\zeta,t))=0.
\]
Solving \eqref{eq:z>1} for $z$ we obtain
\[
z^{*}=\frac{{\displaystyle {-\sqrt{\frac{1-\sqrt{1-t^{2}}}{1+\sqrt{1-t^{2}}}}+\exp\left[-\frac{\arcsin(-t)}{t}\right]}}}{{\displaystyle {\sqrt{\frac{1-\sqrt{1-t^{2}}}{1+\sqrt{1-t^{2}}}}-\exp\left[-\frac{\arcsin(-t)}{t}\right]}}}.
\]
In order to show that $z(K)\neq z^{*}$, it suffices to show that $\Re(\phi(z^{*},t)-\phi(\zeta,t))>0$, as this inequality shows via equation \eqref{eq:zyfunc} that $z^*$ cannot lie on the $z(y)$ curve. 
We calculate 
\begin{eqnarray*}
\Re(\phi(z^{*},t)-\phi(\zeta,t)) & = & -t\left(\Arg(1+z^{*})-\Arg(1-z^{*})\right)-\ln(z^{*})+t\left(\Arg(1+\zeta)-\Arg(1-\zeta)\right)+\ln|\zeta|\\
 & = & -t(0-\pi)-\ln(z^{*})-t\frac{\pi}{2}=t\frac{\pi}{2}-\ln(z^{*}),
\end{eqnarray*}
where we used the equality 
\[
\Arg(1+\zeta)-\Arg(1-\zeta)=-\frac{\pi}{2},
\]
which is a straightforward consequence of the Theorem of Thales and
the fact that $|\zeta|=1$. One readily verifies (perhaps with a CAS) that $t \frac{\pi}{2}> \ln z^*$ for all $t \in (0,1)$, from which the claim $\Re(\phi(z^{*},t)-\phi(\zeta,t))>0$ follows. We now
see that $z(K)\notin(1,\infty)$, and consequently it must be the
case that $z(K)=\infty$. This fact, equation \eqref{eq:zyfunc}, and the definition of $\phi(z,t)$ imply that $K=-\infty$.  The proof of the lemma is now complete. 
\end{proof}
%%%%%%%%%%%%%%%%%%%%%%%%%%%%%%%%%%%%%%

\subsection{Asymptotics}
In this section we develop asymptotic equivalences in order to help us count the number of zeros of $p_n(t)$ via the argument principle. We accomplish this by looking at intervals $\left(\delta e^{-\ln^4 n}/n, C \ln ^4n/n \right)$ and $\left(C \ln ^4 n/n ,1 \right)$ whose union approximates $(0,1)$ as $n \to \infty$. 
\subsubsection{The case $e^{-\ln^{4}n}/n\ll t=\mathcal{O}(\ln^{4}n/n)$}
\begin{lem}
\label{lem:asympsmallt} Let $p_n(t)$ be as in \eqref{eq:pn}. Then asymptotically, as
$n\rightarrow\infty$ 
\[
p_{n}(t)\sim\frac{i\sin\pi(c+p+1-int)\Gamma(c+p+1-int)2^{c+p+1+int}\prod_{k=1}^{N}(1-\alpha_{i}^{2})^{p_{i}}}{n^{c+p+1-int}}
\]
uniformly on $e^{-\ln^{4}n}/n \ll t=\mathcal{O}(\ln^{4}n/n)$. 
\end{lem}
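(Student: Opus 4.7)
The regime $t=\mathcal{O}(\ln^{4}n/n)$ is small enough that the saddle point $\zeta(t)=-it+\sqrt{1-t^{2}}$ lies at distance $\mathcal{O}(t)$ from the branch point $z=1$, so the $z(y)$ contour of Lemma~\ref{lem:zycurve} degenerates and the saddle contribution is no longer separated from the endpoint contribution. The plan is to bypass the saddle point machinery entirely, keep $\Gamma_{2}$ as the contour of integration, and extract the asymptotics from a local expansion of $\psi(z)e^{n\phi(z,t)}$ at $z=1$, recast as a Hankel integral.

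Concretely, I would introduce the local variable $u=n(z-1)$. Since
\[
\psi(z)e^{n\phi(z,t)}=z^{-n-1}(1-z)^{p+c-nit}(1+z)^{p+c+nit}\prod_{i=1}^{N}(1-\alpha_{i}^{2}z^{2})^{p_{i}},
\]
the contour $\Gamma_{2}$ becomes a Hankel-type contour $\Gamma_{2}'$ encircling $[0,\infty)$ in the $u$-plane with Jacobian $du/n$. I would split $\Gamma_{2}'$ into an inner piece $|u|\le R_{n}$ with $R_{n}:=\ln^{5}n$ and an outer tail. On the inner piece, the smallness $|u|/n=o(1)$ gives the uniform approximations
\begin{align*}
z^{-n-1} & =e^{-u}\bigl(1+\mathcal{O}((|u|+u^{2})/n)\bigr),\\
(1+z)^{p+c+nit} & =2^{p+c+nit}\bigl(1+\mathcal{O}((1+nt)|u|/n)\bigr),\\
\prod_{i=1}^{N}(1-\alpha_{i}^{2}z^{2})^{p_{i}} & =\prod_{i=1}^{N}(1-\alpha_{i}^{2})^{p_{i}}\bigl(1+\mathcal{O}(|u|/n)\bigr),
\end{align*}
whose error factors collapse to $o(1)$ because $(\ln^{4}n)(\ln^{5}n)/n=o(1)$. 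The branch-carrying factor $(1-z)^{p+c-nit}=(-u/n)^{p+c-nit}$ takes the exact values $(u/n)^{p+c-nit}e^{\mp i\pi(p+c-nit)}$ on the top/bottom of the $u$-cut.

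Subtracting the bottom from the top contribution, and using $e^{-i\pi\alpha}-e^{i\pi\alpha}=-2i\sin\pi\alpha=2i\sin\pi(\alpha+1)$ with $\alpha=p+c-nit$, the inner portion of $p_{n}(t)$ collapses to
\[
2i\sin\pi(p+c+1-nit)\cdot 2^{p+c+nit}\prod_{i=1}^{N}(1-\alpha_{i}^{2})^{p_{i}}\cdot n^{-(p+c+1-nit)}\cdot\mathcal{I}_{n}\bigl(1+o(1)\bigr),
\]
where $\mathcal{I}_{n}$ is the Hankel integral $\int u^{p+c-nit}e^{-u}\,du$, equal to $\Gamma(p+c+1-nit)$ when $\Re(p+c+1)>0$ and extended meromorphically (via the standard loop-around-origin Hankel representation) otherwise. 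Combining the constants via $2\cdot 2^{p+c+nit}=2^{p+c+1+nit}$ recovers precisely the claimed asymptotic.

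The main obstacle is the tail bound and making every error term genuinely uniform in $t$. For $|u|>R_{n}$ the factor $z^{-n-1}$ dominates any algebraic growth, yielding a contribution of order $e^{-cR_{n}}=e^{-c\ln^{5}n}$, super-polynomially small. It must still beat the main term, which has magnitude $\asymp e^{\pi nt}/n^{p+c+1}$ --- and it does precisely because the lower hypothesis $t\gg e^{-\ln^{4}n}/n$ forces $e^{\pi nt}\gg e^{-\ln^{4}n}$, comfortably larger than the tail; this is the reason for the seemingly exotic lower bound on $t$. The remaining work is bookkeeping: choosing $R_{n}$ so that it simultaneously dominates $nt$ and $1/t$-type quantities while keeping $R_{n}^{2}/n=o(1)$, and verifying that each Taylor-expansion error is $o(1)$ uniformly across the admissible $t$-range.
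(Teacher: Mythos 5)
Your contour, rescaling, and the emergence of the $\sin\pi(\cdot)\,\Gamma(\cdot)$ factor are all essentially the same as what the paper does: the substitutions $z\to e^{z}$ (paper) and $u=n(z-1)$ (you) are equivalent local coordinates near the branch point, the cutoff $|u|\le\ln^{5}n$ matches the paper's $|z|\le\ln^{5}n/n$, and the truncated Hankel loop producing $2i\sin\pi(c+p+1-int)\,\Gamma(c+p+1-int)$ is exactly Lemma~10 of~\cite{cft}, which the paper invokes. The inner-piece expansions you list are the correct ones, and the paper records a bound $e^{\pi nt}$ on $\left|(1+z)^{int}/(1-z)^{int}\right|$ that you should carry explicitly through the tail estimate (it is dominated by $e^{-c\ln^{5}n}$ only because $nt=\mathcal{O}(\ln^{4}n)$, so it is not entirely free).

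However, your explanation of why the lower bound $t\gg e^{-\ln^{4}n}/n$ is needed is incorrect, and this is the one place where a genuine argument is required rather than bookkeeping. You write that the main term has magnitude $\asymp e^{\pi nt}/n^{p+c+1}$, and that the lower bound on $t$ forces $e^{\pi nt}\gg e^{-\ln^{4}n}$; but $e^{\pi nt}\ge1$ for all $t\ge0$, so this inequality is vacuous and cannot be the point. The actual issue is that the factor
\[
\sin\pi(c+p+1-int)\,\Gamma(c+p+1-int)=\frac{\pi}{\Gamma(-c-p+int)}
\]
can be \emph{small} when $nt$ is small. If $c+p+1$ is a positive integer then $\Gamma(-c-p+int)$ has a pole at $nt=0$, so the right-hand side vanishes there and is of size $\asymp nt$ for $nt\to0$. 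Thus the main term can be as small as $\asymp nt/n^{p+c+1}$, and the role of the hypothesis $nt\gg e^{-\ln^{4}n}$ is precisely to guarantee that this degenerating main term still dominates the tail contribution $\mathcal{O}(e^{\pi nt-\ln^{5}n/2})$. The paper handles this by citing the explicit lower bounds
\[
\left|2i\sin\pi(c+p+1-int)\Gamma(c+p+1-int)\right|\gg
\begin{cases}
e^{-\ln^{4}n} & \text{if } e^{-\ln^{4}n}\ll nt=\mathcal{O}(1),\\
\exp(n\pi t-\pi\ln^{4}n) & \text{if } 1\ll nt=\mathcal{O}(\ln^{4}n),
\end{cases}
\]
from \cite{cft}. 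To complete your argument, you would need to establish (or cite) something of this form; without it, the claim that the error terms are ``$o(1)$ uniformly across the admissible $t$-range'' has a hole exactly at the small-$nt$ end, which is the delicate end.

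A second, smaller issue: your split of the Hankel loop into ``top'' and ``bottom'' contributions, followed by $e^{-i\pi\alpha}-e^{i\pi\alpha}=-2i\sin\pi\alpha$ and identifying the residual integral with the Euler $\Gamma$-integral $\int_{0}^{\infty}u^{\alpha}e^{-u}\,du$, only makes sense when $\Re(\alpha)=p+c>-1$, so that the small circle around $u=0$ contributes nothing and the rays are separately integrable. You acknowledge the Hankel loop extension parenthetically, but to keep the argument valid for all real $p,c$ one should work with the loop integral directly (as the cited Lemma~10 does) rather than first splitting and then patching; the loop integral is a single entire function of $\alpha$ equal to $2i\sin\pi(\alpha+1)\Gamma(\alpha+1)$ on the nose, with no convergence caveat.
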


We rewrite the integral over $\Gamma_{2}$ as 
\[
\int_{+\infty}^{(1^{-})}\psi(z)e^{n\phi(z,t)}dz
\]
where the path of integration is the Hankel contour (see Figure \ref{fig:hankels}) looping around
the ray $[1,\infty)$. The notation $1^{-}$ means the path goes around
$1$ in the negative (clockwise) direction. We make the substitution $z \to e^{z}$
to arrive at the expression 
\[
\int_{+\infty}^{(0^{-})}\psi(e^{z})e^{n\phi(e^{z},t)}e^{z}dz.
\]
\begin{figure}[htbp]
\begin{center}
\includegraphics[width =3 in]{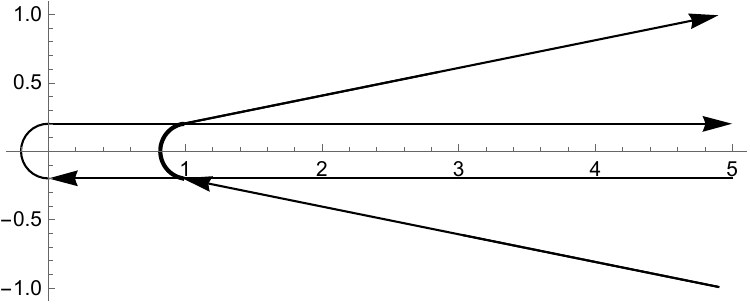}
\caption{The Hankel contours before and after the change of variable $z \to e^z$}
\label{fig:hankels}
\end{center}
\end{figure}

Suppose $\epsilon>0$ is such that $n\epsilon=o(1)$. We break up
the range of integration of the last integral as 
\begin{equation}
 \int_{(0^{-})}^{ln^{5}n/n\pm i\epsilon}\psi(e^{z})e^{n\phi(e^{z},t)}e^{z}dz + \int_{\ln^{5}n/n+i\epsilon}^{+\infty+i\epsilon}\psi(e^{z})e^{n\phi(e^{z},t)}e^{z}dz + \int_{+\infty-i\epsilon}^{\ln^{5}n/n-i\epsilon}\psi(e^{z})e^{n\phi(e^{z},t)}e^{z}dz.\label{eq:splitintegral}
\end{equation}
Using the identity 
\begin{equation}
\psi(z)e^{n\phi(z)}=\frac{(1-z)^{p+c-int}(1+z)^{p+c+int}\prod_{k=1}^{N}(1-\alpha_{i}^{2}z^{2})^{p_{i}}}{z^{n+1}},\label{eq:integrand}
\end{equation}
we conclude that the first integral in \eqref{eq:splitintegral} is
asymptotic to 
\begin{align}
 & 2^{c+p+int}\prod_{k=1}^{N}(1-\alpha_{i}^{2})^{p_{i}}\int_{(0^{-})}^{\ln^{5}n/n\pm i\epsilon}\frac{(1-e^{z})^{c+p-int}}{e^{nz}}dz\nonumber \\
\sim & 2^{c+p+int}\prod_{k=1}^{N}(1-\alpha_{i}^{2})^{p_{i}}\int_{(0^{-})}^{\ln^{5}n/n\pm i\epsilon}(-z)^{c+p-int}e^{-nz}dz.\nonumber \\
= & \frac{2^{c+p+int}\prod_{k=1}^{N}(1-\alpha_{i}^{2})^{p_{i}}}{n^{c+p+1-int}}\int_{(0^{-})}^{\ln^{5}n\pm in\epsilon}(-z)^{c+p-int}e^{-z}dz.\label{eq:mainint}
\end{align}
By Lemma 10 in \cite{cft}, if $n\epsilon=o(1)$, then
\[
\int_{(0^{-})}^{\ln^{5}n\pm in\epsilon}(-z)^{c+p-int}e^{-z}dz\sim2i\sin\pi(c+p+1-int)\Gamma(c+p+1-int)
\]
as $n\rightarrow\infty$, uniformly on $e^{-\ln^{4}n}/n\ll t=\mathcal{O}(\ln^{4}n/n)$. Thus we have established the claimed asymptotic expression for the first integral. \\ 
We now find a bound for the second term of \eqref{eq:splitintegral} given by
\[
\int_{\ln^{5}n/n+i\epsilon}^{+\infty+i\epsilon}\psi(e^{z})e^{n\phi(e^{z},t)}e^{z}dz.
\]
 For $u \in [\ln^{5}n/n, \infty)$ write $z=u+i\epsilon$. Then
\[
|1-e^{z}|=|-u-i\epsilon+\mathcal{O}(u^{2}+\epsilon^{2})| \sim u >\frac{\ln^{5}n}{n},
\]
and 
\begin{align*}
|1-\alpha_{i}^{2}e^{2z}| & >\text{|\ensuremath{\Im(1-\alpha_{i}^{2}e^{2z})|}}\\
 & =\alpha_{i}^{2}e^{2u}\sin2\epsilon\\
 & \gg\epsilon.
\end{align*}
Recall from (\ref{eq:psidef}) that
\[
\psi(z)=z^{-1}\prod_{k=0}^{N}(1-\alpha_{i}z^{2})^{p_{i}}.
\]
Consequently, 
\[
e^{z}\psi(e^{z})=\begin{cases}
\mathcal{O}\left(\frac{n^{|p+c|}}{\ln^{5|p+c|}n}+\frac{1}{\epsilon^{\sum_{i=1}^{N}|p_{i}|}}\right) & \text{ if }z=\mathcal{O}(1)\\
\mathcal{O}\left(e^{2u\sum_{i=0}^{N}|p_{i}|}\right) & \text{ if }z\gg1.
\end{cases}
\]
We also note that 
\[
\left|\left(\frac{1+e^{z}}{1-e^{z}}\right)^{int}\right|=\exp(-nt(\Arg(1+e^{z})-\Arg(1-e^{z}))<e^{\pi nt},
\]
since $\Arg(1+e^{z})-\Arg(1-e^{z})$ is an interior angle of the triangle with vertices $0$, $1+e^z$ and $1-e^{z}$.

We break the range of integration of 
\[
\int_{\ln^{5}n/n+i\epsilon}^{+\infty+i\epsilon}\psi(e^{z})e^{n\phi(e^{z},t)}e^{z}dz
\]
into two pieces: (i) $\ln^{5}n/n<|z|=\mathcal{O}(1)$ and (ii) $|z|\gg1$. The integral
over the first piece is 
\[
\mathcal{O}\left(e^{n\pi t}\left(\frac{n^{|p+c|}}{\ln^{5|p+c|}n}+\frac{1}{\epsilon^{\sum_{i=1}^{N}|p_{i}|}}\right)\frac{e^{-\ln^{5}n}}{n}\right),
\]
while  the integral over the second piece is 
\[
\mathcal{O}\left(e^{\pi nt}\int_{C}^{\infty}\exp\left(-(n+1)u+2u\sum_{i=0}^{N}|p_{i}|\right)du\right)=\mathcal{O}\left(\frac{e^{\pi nt}}{n}e^{-nC}\right)
\]
for some large constant $C$. Recall that we are in the case $nt\ll\ln^{4}n$. Thus, if we choose $\epsilon$ so that besides satsifying the condition $n\epsilon=o(1)$,  we also have
\[
\frac{1}{\epsilon^{\sum_{i=1}^{N}|p_{i}|}}=\mathcal{O}\left(\exp\left(\frac{\ln^{5}n}{2}\right)\right),
\]
then 
\[
\int_{\ln^{5}n/n+i\epsilon}^{+\infty+i\epsilon}\psi(e^{z})e^{n\phi(e^{z},t)}e^{z}dz=\mathcal{O}\left(\exp\left(\pi nt-\frac{\ln^{5}n}{2}\right)\right).
\]
With a similar argument we also obtain the asymptotic expression
\[
\int_{+\infty-i\epsilon}^{\ln^{5}n/n-i\epsilon}\psi(e^{z})e^{n\phi(e^{z},t)}e^{z}dz=\mathcal{O}\left(\exp\left(\pi nt-\frac{\ln^{5}n}{2}\right)\right).
\]
From equations \eqref{eq:splitintegral} , \eqref{eq:mainint}, Lemma 10 in \cite{cft},
and the fact (see \cite[p.11]{cft}) that 
\[
2i\sin(\pi(c+p+1-int))\Gamma(c+p+1-int)
\]
is 
\[
\begin{cases}
\gg e^{-\ln^{4}n} & \text{if }e^{-\ln^{4}n}\ll nt=\mathcal{O}(1)\\
\gg\exp\text{\ensuremath{\left(n\pi t-\pi\ln^{4}n\right)}} & \text{if }1\ll nt=\mathcal{O}(\ln^{4}n)
\end{cases},
\]
we conclude that 
\[
\int_{+\infty}^{(0^{-})}\psi(e^{z})e^{n\phi(e^{z},t)}e^{z}dz\sim\frac{i\sin\pi(c+p+1-int)\Gamma(c+p+1-int)2^{c+p+1+int}\prod_{k=1}^{N}(1-\alpha_{i}^{2})^{p_{i}}}{n^{c+p+1-int}}
\]
uniformly on $e^{-\ln^{4}n}\ll nt=\mathcal{O}(\ln^{4}n)$,
completing the proof of the lemma.

\subsection{The case $\ln^{4}n/n\ll t<1$}
This section develops the asymptotic equivalence fromula for $p_n(t)$ on the remaining range of $t$.
\begin{lem}
\label{lem:globalasymp}Let $p_n(t)$ be as in \eqref{eq:pn}. Let $\epsilon=\ln n/\sqrt{n}$. Then as $n\rightarrow\infty$,
\[
p_{n}(t)\sim\psi(\zeta)e^{n\phi(\zeta,t)}\int_{-\epsilon}^{\epsilon}e^{-ny^{2}}z'(y)dy
\]
uniformly on $\ln^{4}n/n\ll t<1$. 
\end{lem}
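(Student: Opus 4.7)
The plan is to carry out a saddle point analysis for $p_{n}(t)$ using the parametrization $z(y)$ supplied by Lemma \ref{lem:zycurve}. First I would deform the contour $\Gamma_{2}$ into an equivalent contour passing through the saddle point $\zeta$ and lying, where it matters, along the curve $z(y)$. By Cauchy's theorem this homotopy is legitimate as long as the new contour encloses the branch cut $[1,\infty)$ while avoiding the other singularities of $\psi(z) e^{n\phi(z,t)}$; the vertical segment appearing in Figure \ref{fig:deformed} plays the role of a connector and will contribute negligibly. After the change of variable the identity $\phi(z(y),t)-\phi(\zeta,t)=-y^{2}$ gives
\[
p_{n}(t)=e^{n\phi(\zeta,t)}\int_{-\infty}^{L}\psi(z(y))e^{-ny^{2}}z'(y)\,dy
\]
(up to the negligible connector), which is the starting point for the asymptotic analysis.

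Next I would split the $y$-integral into the \emph{center} piece $|y|\le\epsilon$ and the two \emph{tails}. On the center, a Taylor expansion based on \eqref{eq:zsmally} gives $\psi(z(y))=\psi(\zeta)\bigl(1+O(y\cdot z'(0))\bigr)$, so
\[
\int_{-\epsilon}^{\epsilon}\psi(z(y))e^{-ny^{2}}z'(y)\,dy=\psi(\zeta)\int_{-\epsilon}^{\epsilon}e^{-ny^{2}}z'(y)\,dy\cdot(1+o(1)),
\]
provided $\epsilon\cdot z'(0)=o(1)$, which the choice $\epsilon=\ln n/\sqrt{n}$ secures by the explicit form of $\phi_{z^{2}}(\zeta,t)$ from Lemma \ref{lem:phi2zeta}. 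The right tail $y\in[\epsilon,L]$ lies on a compact arc of $z(y)$ confined to the open fourth quadrant away from the zeros and singularities of $\psi$, so $|\psi(z(y))z'(y)|$ is bounded there and the tail contributes at most $O(e^{-n\epsilon^{2}})=O(e^{-\ln^{2}n})$. For the left tail $y\in(-\infty,-\epsilon]$, one has $|z(y)|\to\infty$ as $y\to-\infty$; extracting the leading behavior of \eqref{eq:zyfunc} together with $\Re\phi(z,t)\sim-\ln|z|$ as $z\to\infty$ yields $\ln|z(y)|\sim y^{2}$, whence $|\psi(z(y))|=O\bigl(|z(y)|^{2\sum p_{i}-1}\bigr)=O\bigl(e^{(2\sum p_{i}-1)y^{2}}\bigr)$. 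This polynomial-in-$|z|$ growth is swallowed by $e^{-ny^{2}}$ for large $n$, producing super-polynomial decay of the left tail as well. Comparing the center (of order $|z'(0)|/\sqrt{n}$) against the tails then gives the claimed equivalence.

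The principal obstacle I expect is achieving genuine \emph{uniformity} across the entire range $\ln^{4}n/n\ll t<1$. As $t\uparrow1$ the saddle becomes degenerate: $\phi_{z^{2}}(\zeta,t)\to 0$ by Lemma \ref{lem:phi2zeta}, so $z'(0)=\sqrt{2}i/\sqrt{\phi_{z^{2}}(\zeta,t)}$ blows up, and at the lower end of the $t$-range the factor $e^{n\phi(\zeta,t)}$ is only marginally larger than the tail bounds. One must therefore track the joint $t$- and $n$-dependence of the center width $\epsilon$, of $z'(0)$, and of the tail constants to ensure the error-to-main-term ratio tends to zero uniformly; the specific scaling $\epsilon=\ln n/\sqrt{n}$ is tuned precisely to balance $\epsilon\cdot z'(0)=o(1)$ against $e^{-n\epsilon^{2}}$ being super-polynomially small in $n$, regardless of where $t$ sits in its admissible interval.
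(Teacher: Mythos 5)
Your high-level architecture matches the paper's: deform $\Gamma_{2}$ onto the $z(y)$ curve plus a vertical connector, split at $|y|=\epsilon=\ln n/\sqrt{n}$, and show the tails are $\mathcal{O}\bigl(e^{n\phi(\zeta,t)-n\epsilon^{2}}\bigr)$. Where the proposal has a genuine gap is in the step you compress into ``comparing the center (of order $|z'(0)|/\sqrt{n}$) against the tails.'' The quantity you must \emph{lower}-bound is the modulus of the complex-valued integral $\int_{-\epsilon}^{\epsilon}e^{-ny^{2}}z'(y)\,dy$, and since $z'(y)$ is not real-valued, there is a priori nothing ruling out cancellation that would make the integral far smaller than $|z'(0)|/\sqrt{n}$. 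Your proposed route --- Taylor expansion $z'(y)\approx z'(0)$ uniformly on $|y|\le\epsilon$ --- requires uniform control of $z''(y)$ across $\ln^{4}n/n\ll t<1$, and you yourself flag that $z'(0)\to\infty$ as $t\uparrow1$; the proposal does not close this. The paper's proof does something different and more robust: it invokes the Appendix result that $\Im z'(y)>0$ on all of $(-\infty,L)$ (Lemma~\ref{lem:negyImphi_z} and its counterpart for $y>0$), so that
\[
\left|\int_{-\epsilon}^{\epsilon}e^{-ny^{2}}z'(y)\,dy\right|\ge\int_{-\epsilon}^{0}e^{-ny^{2}}\Im z'(y)\,dy,
\]
with no cancellation, and then combines the sharpened estimate of Remark~\ref{rem:Imphiznegy} with $\Im(1/z)\gg t\gg\ln^{4}n/n$ to get the explicit lower bound $\gg\ln^{4}n/n^{3}$, which beats $e^{-n\epsilon^{2}}=e^{-\ln^{2}n}$. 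This positivity of $\Im z'$ is the nontrivial ingredient that an entire Appendix is devoted to proving, and it is exactly what your sketch omits. A secondary, smaller gap: you assert the vertical connector ``contributes negligibly'' without argument, whereas the paper establishes this by showing $\Re\phi(z(L)+iy,t)$ is strictly decreasing in $y$, then splitting that ray at $y=n$ and bounding each piece; some justification is needed since $\psi$ grows polynomially along the line.
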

\begin{proof}
We deform $\Gamma_{2}$ (see Figure \ref{fig:contour}) into the union of the curve $z(y)$, $-\infty<y<L$ and the vertical line $z(L)+iy$,
$0\le y<\infty$. (See Figure \ref{fig:deformed} for the new countour of integration.) On the curve $\Gamma_{\epsilon}=z(y),-\epsilon<y<\epsilon$
\begin{eqnarray}
\int_{\Gamma_{\epsilon}}e^{n\phi(z,t)}\psi(z)dz & = & \psi(\zeta)\int_{\Gamma_{\epsilon}}e^{n\phi(z,t)}dz\left(1+o(1)\right)\nonumber \\
 & = & \psi(\zeta)\int_{\Gamma_{\epsilon}}e^{n(\phi(z,t)-\phi(\zeta,t)+\phi(\zeta,t))}dz\left(1+o(1)\right)\nonumber \\
 & = & \psi(\zeta)e^{n\phi(\zeta,t)}\int_{-\epsilon}^{\epsilon}e^{-ny^{2}}z'(y)dy(1+o(1)).\label{eq:mainterm}
\end{eqnarray}

Next, we find an upper bound for the integral over the tail $z(y)$,
$-\infty<y<-\epsilon$. On the portion of this curve with a fixed
(independent of $n$) large finite length we have $\psi(z)=\mathcal{O}(1)$
and $\Re\phi(z,t)\le\phi(\zeta,t)-\epsilon^{2}$. Hence
the integral of $e^{n \phi(z,t)} \psi(z)$ on this portion is 
\[
\mathcal{O}\left(e^{n\phi(\zeta,t)-n\epsilon^{2}}\right).
\]
On the other hand, if $-\infty < y<-C$ for large $C>0$, then the equation
\[
-y^2=it \Log \frac{1+z}{1-z}-\Log z-\phi(\zeta,t)
\]
implies that $|z| \asymp e^{y^2}$. Thus from the equivalence 
\[
\phi_z(z,t)=-\frac{1}{z}+\frac{it}{1-z}+\frac{it}{1+z} \asymp \frac{1}{z}
\]
we deduce that 
\[
z'(y)=-\frac{2y}{\phi_z(z,t)}=\mathcal{O}(e^{2 y^2}).
\]
In addition, equation \eqref{eq:phidef} gives
\[
\psi(z(y))=\mathcal{O}(e^{Ay^2})
\]
for some $A>0$, from which we conclude that the integral over the tail $z(y), -\infty <y<-C$ is
\[
\mathcal{O} \left( e^{n\phi(\zeta,t)}\int_{-\infty}^{-C}e^{-(n-A-2)y^{2}}dy\right)=\mathcal{O}\left(e^{n\phi(\zeta,t)-(n-A-2)C^{2}}\right).
\]
Thus the integral over $z(y)$ for $-\infty<y<-\epsilon$ is $\mathcal{O} \left(e^{n \phi(\zeta,t)-n \epsilon^2} \right)$. Similarly, the integral over the tail $\epsilon<y<L$ is also $\mathcal{O}\left(e^{n\phi(\zeta,t)-n\epsilon^{2}}\right)$.

Next, we will show that integral over the vertical line $z(L)+iy$, $0\le y<\infty$
is also $\mathcal{O}\left(e^{n\phi(\zeta,t)-n\epsilon^{2}}\right)$.
We note that for any fixed $0<t<1$ and $z=z(L)+iy$
\begin{align*}
\frac{d\Re\phi(z,t)}{dy} & =\Re\left(i\phi_{z}(z,t)\right)\\
  &= \Re \left\{ i\left(\frac{it }{1+z}+\frac{it}{1-z}-\frac{1}{z} \right)\right\} \\
  & =\Re\left(-\frac{t}{1-z}-\frac{t}{1+z}-\frac{i}{z}\right).
\end{align*}
We substitute $z=z(L)+iy$ to the last expression and use a computer
algebra system to write this expression as 
\[
-t\frac{(1-z(L))|1-z|^2+(1+z(L))|1-z|^2}{|1-z^2|^2}-\frac{y}{z^2(L)+y^2}<0
\]
since $t \in (0,1), y \geq 0$ and $z(L)\in(0,1)$ by Lemma \ref{lem:zycurve}. Thus $\Re\phi(z(L)+iy,t)$
is a decreasing function in $y$. We conclude that for $z=z(L)+iy$,
\begin{equation}
\Re\phi(z,t)<\Re\phi(z(L),t)=\Re\phi(\zeta,t)-L^{2}.\label{eq:rephiineqvert}
\end{equation}
Moreover for $z=z(L)+iy$
\[
\psi(z)=z^{-1}\prod_{k=0}^{N}(1-\alpha_{i}z^{2})^{p_{i}}=\mathcal{O}(y^{A})
\]
for some $A>0$. We split the integral of interest as follows:
\[
\int_{z=z(L)+iy}e^{n\phi(z,t)}\psi(z)dz=i\int_{0}^{n}e^{n\phi(z,t)}\psi(z)dy+i\int_{n}^{\infty}e^{n\phi(z,t)}\psi(z)dy.
\]
By equation (\ref{eq:rephiineqvert}), the first integral on the right side
is 
\[
\mathcal{O}(n^{A+1}e^{n\Re\phi(\zeta,t)-nL^{2}})=\mathcal{O}\left(e^{n\phi(\zeta,t)-n\epsilon^{2}}\right).
\]
In the case $y>n$, we notice that
\[
e^{n\phi(z,t)}\psi(z)=\frac{(1-z)^{p+c-int}(1+z)^{p+c+int}\prod_{i=1}^{N}(1-\alpha_{i}^{2}z^{2})^{p_{i}}}{z^{n+1}}
\]
where 
\[
\left|(1-z)^{p+c-int}\right|=|1-z|^{p+c}e^{nt\Arg(1-z)}=\mathcal{O}(y^{p+c}e^{n\pi}).
\]
With the same bound for $\left|(1-z)^{p+c-int}\right|$, we conclude
that 
\[
\left|e^{n\phi(z,t)}\psi(z)\right|=\mathcal{O}\left(\frac{e^{2n\pi}}{y^{n-B}}\right)
\]
for some $B$, and consequently 
\[
\int_{n}^{\infty}e^{n\phi(z,t)}\psi(z)dy=\mathcal{O}\left(\frac{e^{2\pi n}}{n^{n-B}}\right),
\]
an expression which is $\mathcal{O}\left(e^{n\phi(\zeta,t)-n\epsilon^{2}}\right)$. 

It remains to show that the bound, $\mathcal{O}\left(e^{n\phi(\zeta,t)-n\epsilon^{2}}\right)$,
is little-Oh of (\ref{eq:mainterm}), thereby establishing the asymptotic dominance of the central piece of the integral. Clearly, 
\[
\left|\int_{-\epsilon}^{\epsilon}e^{-ny^{2}}z'(y)dy\right|>\left|\int_{-\epsilon}^{\epsilon}e^{-ny^{2}}\Im z'(y)dy\right|.
\]
Using the result that $\Im z'(y)>0$ for $y \in (-\infty, L)$ - see the Appendix for the details, -  the right side is at least 
\[
\int_{-\epsilon}^{0}e^{-ny^{2}}\Im z'(y)dy.
\]
By (\ref{eq:zderiv}) and Remark \ref{rem:Imphiznegy}, we see that if
$-\epsilon<y<0$, then 
\begin{align*}
\Im z'(y) & =\frac{|2y\Im\phi_{z}(z(y),t)|}{|\phi_{z}(z(y),t)|^{2}}\\
 & >\frac{2|y|}{|\phi_{z}(z(y),t)|^{2}}\left|\min\left\{-\frac{1}{2}\Im\left(\frac{1}{z}\right),-\frac{y^{2}}{4t(|z|^{2}-1)}\Im\left(\frac{1}{z}\right)\right\}\right|\\
 & \gg|y|^{3}\Im\frac{1}{z}.
\end{align*}
Since $\Im z(y)$ is increasing in $y$, for $-\epsilon <y<0$, 
\[
\Im\frac{1}{z}=-\frac{\Im z}{|z|^{2}}>-\frac{\Im\zeta}{|z|^{2}}=\frac{t}{|z|^{2}}\gg\frac{\ln^{4}n}{n}.
\]
Thus 
\begin{align*}
\int_{-\epsilon}^{0}e^{-ny^{2}}\Im z'(y)dy & \gg\frac{\ln^{4}n}{n}\int_{-\epsilon}^{0}e^{-ny^{2}}|y|^{3}dy\\
 & =-\frac{\ln^{4}n}{n}\left.\frac{e^{-ny^{2}}\left(ny^{2}+1\right)}{2n^{2}}\right|_{0}^{\epsilon}\\
 & \asymp\frac{\ln^{4}n}{n^{3}}.
\end{align*}
We conclude that the bound $\mathcal{O}\left(e^{n\phi(\zeta,t)-n\epsilon^{2}}\right)$ for the tail is little-Oh of (\ref{eq:mainterm}), since for $\epsilon=\ln n/\sqrt{n}$,
\[
e^{-n\epsilon^{2}}=e^{-\ln^{2}n}=o\left(\frac{\ln^{4}n}{n^{3}}\right).
\]
This finishes the proof of the lemma, and provides the claimed asymptotic equivalence for $p_n(t)$ uniformly on $\ln^n /n \ll t <1$.
\end{proof}

\subsection{The zero distribution} In this section we compute the change in the argument fo the asymptotic representation of $p_n(t)$, leading to the completion of the proof of the main result.
\begin{lem}
\label{lem:changeargglobal} Let $\phi(z,t), \psi(z)$ and $z(y)$ as in the preceding discussion. If 
\[
g(\zeta(t))=\psi(\zeta)e^{n\phi(\zeta,t)}\int_{-\epsilon}^{\epsilon}e^{-ny^{2}}z'(y)dy,
\]
then for any small $\tau$, 
\[
\Delta\arg_{\tau\le t<1}g(\zeta(t))=\frac{n\pi}{2}+n\tau\ln\tau-n(\ln2+1)\tau+\frac{\pi(1-p-c)}{2}+C+\mathcal{O}(n\tau^{2}+\tau)
\]
where $|C|\le\pi$. 
\[
.
\]
\end{lem}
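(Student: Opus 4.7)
The plan is to decompose $\arg g(\zeta(t))$ into three additive summands,
\[
\arg g(\zeta(t))=\arg\psi(\zeta(t))+n\,\Im\phi(\zeta(t),t)+\arg I_{n}(t),
\]
where $I_{n}(t):=\int_{-\epsilon}^{\epsilon}e^{-ny^{2}}z'(y)\,dy$, and to track the change in each summand as $t$ varies from $\tau$ to $1$.

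The leading terms come from $n\,\Im\phi(\zeta(t),t)$. Using $|\zeta|=1$, $\Arg\zeta=-\arcsin t$, $|1+\zeta|^{2}=2(1+\sqrt{1-t^{2}})$ and $|1-\zeta|^{2}=2(1-\sqrt{1-t^{2}})$, a direct computation from the definition of $\phi$ gives
\[
\Im\phi(\zeta,t)=\tfrac{t}{2}\ln\frac{1+\sqrt{1-t^{2}}}{1-\sqrt{1-t^{2}}}+\arcsin t.
\]
At $t=1$ this equals $\pi/2$; for small $\tau$, writing the logarithm as $2\ln(1+\sqrt{1-\tau^{2}})-2\ln\tau$ and Taylor-expanding yields $\Im\phi(\zeta,\tau)=-\tau\ln\tau+(\ln 2+1)\tau+O(\tau^{3})$. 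Subtracting and multiplying by $n$ produces exactly $\frac{n\pi}{2}+n\tau\ln\tau-n(\ln 2+1)\tau+O(n\tau^{3})$, which accounts for all the $n$-scale terms in the lemma.

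For $\arg\psi(\zeta)$, factor $\psi(z)=z^{-1}(1-z^{2})^{p+c}\prod_{i=1}^{N}(1-\alpha_{i}^{2}z^{2})^{p_{i}}$. From $|1-\zeta^{2}|=2t$ and $\Arg(1-\zeta^{2})=\pi/2-\arcsin t$, the changes in $\arg\zeta^{-1}$ and $\arg(1-\zeta^{2})^{p+c}$ combine to $(1-(p+c))(\pi/2-\arcsin\tau)=\frac{\pi(1-p-c)}{2}+O(\tau)$, producing the explicit constant stated. Each remaining factor $(1-\alpha_{i}^{2}\zeta^{2})^{p_{i}}$ is zero free (since $|\alpha_{i}|>1$), so contributes a bounded change in argument as $\zeta^{2}(t)$ sweeps the lower unit semicircle from near $1$ to $-1$. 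For the third summand, the saddle-point expansion $I_{n}(t)\sim z'(0)\sqrt{\pi/n}$ together with $z'(0)=i\sqrt{2}/\sqrt{\phi_{z^{2}}(\zeta,t)}$ and Lemma~\ref{lem:phi2zeta} (which confines $\phi_{z^{2}}(\zeta,t)$ to the right half-plane) forces $\arg I_{n}(t)$ to vary continuously within a bounded interval. Both bounded contributions get absorbed into the constant $C$, and the bound $|C|\le\pi$ follows from the explicit ranges of the individual argument changes, modulo branch choices.

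The main obstacle, in my view, is controlling $\arg I_{n}(t)$ uniformly in $t$, especially near $t\to 1^{-}$ where $\Re\phi_{z^{2}}(\zeta,t)=2(1-t^{2})\to 0$ and the saddle degenerates. One must verify that the subleading corrections to the saddle-point approximation for $I_{n}(t)$ (from the nonlinear tail of $z(y)$ past its tangent line at $\zeta$) do not destroy the bounded-argument estimate; the tail estimates and splitting used in the proof of Lemma~\ref{lem:globalasymp} should suffice to absorb these corrections into the stated $O(n\tau^{2}+\tau)$ remainder, which then accumulates from the Taylor remainder in the $\Im\phi$ expansion, the $O(\tau)$ error in the $\arg\psi$ computation, and analogous higher-order terms from $\arg I_{n}(t)$.
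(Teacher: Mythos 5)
Your three-term decomposition and the leading computations for the first two summands match the paper's. For $n\Delta\Im\phi(\zeta,t)$ the paper expands $\zeta(\tau)=1-i\tau-\tau^2/2+\mathcal{O}(\tau^3)$ and evaluates $\Im\phi(\zeta(\tau),\tau)$ directly, rather than, as you do, first putting $\Im\phi(\zeta,t)$ in closed form and Taylor-expanding; both routes give $\tau\ln 2-\tau\ln\tau+\tau+\mathcal{O}(\tau^2)$, so the $n$-scale terms agree.

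The gap is in your handling of $\Delta\arg I_n(t)$ for $I_n(t)=\int_{-\epsilon}^{\epsilon}e^{-ny^2}z'(y)\,dy$. You propose to control $\arg I_n$ via the saddle-point expansion $I_n\sim z'(0)\sqrt{\pi/n}$ and then (correctly) note that this degenerates as $t\to 1^-$ because $\Re\phi_{z^2}(\zeta,t)\to 0$---but you leave that difficulty unresolved. The paper uses no asymptotics for $I_n$ at this step. It invokes the fact, established in the Appendix (the two lemmas there plus Remark~\ref{rem:Imphiznegy}), that $\Im z'(y)>0$ for every $y\ne 0$; hence $\Im I_n(t)\ge 0$ for every $t$, so $I_n(t)$ stays in the closed upper half-plane, the continuous determination of $\arg I_n(t)$ remains within an interval of length $\pi$, and $|\Delta\arg_{\tau\le t<1}I_n(t)|\le\pi$ uniformly. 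That single sign observation supplies the constant $C$ with $|C|\le\pi$ with no asymptotics and no difficulty near $t=1$; this is the piece of the paper's argument you are missing.

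A secondary remark: you correctly observe that the factors $(1-\alpha_i^2\zeta^2)^{p_i}$, $i\ge 1$, contribute bounded changes of argument, but these are not $\mathcal{O}(\tau)$. Since $|\alpha_i|>1$, the point $1-\alpha_i^2\zeta(t)^2$ runs from near $1-\alpha_i^2<0$ (argument $\pi+\mathcal{O}(\tau)$) at $t=\tau$ to $1+\alpha_i^2>0$ (argument $0$) at $t=1$, and $\Im(1-\alpha_i^2\zeta^2)=2\alpha_i^2 t\sqrt{1-t^2}\ge 0$ keeps it in the upper half-plane, so each factor contributes $-\pi p_i+\mathcal{O}(\tau)$ to $\Delta\arg\psi$, for a total of $-\pi\sum_{i\ge 1}p_i+\mathcal{O}(\tau)$. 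Your ``modulo branch choices'' clause cannot make that fit inside a constant of modulus $\le\pi$. I should note, though, that the paper's own computation of $\Delta\arg\psi$---which compares $\psi$ at the two endpoints and implicitly treats both $\prod(1-\alpha_i^2)^{p_i}$ and $\prod(1+\alpha_i^2)^{p_i}$ as having fixed argument---leaves the same term unaccounted for, so this is a concern shared by your write-up and the paper rather than a defect unique to your attempt.
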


\begin{proof}
The definition of $g(\zeta(t))$ yields 
\[
\Delta\arg_{\tau\le t<1}g(\zeta(t))=\Delta\arg_{\tau\le t<1}\psi(\zeta)+n\Delta\Im_{\tau\le t<1}\phi(\zeta,t)+\Delta\Arg_{\tau\le t<1}\int_{-\epsilon}^{\epsilon}e^{-ny^{2}}z'(y)dy.
\]
The estimate 
\[
\zeta(\tau)=1-i\tau-\frac{\tau^{2}}{2}+\mathcal{O}(\tau^{3})
\]
gives 
\begin{align*}
\Im\phi(\zeta(\tau),\tau) & =\tau\ln|1+\zeta(\tau)|-\tau\ln|1-\zeta(\tau)|-\Arg\zeta(\tau)\\
 & =\tau\ln2-\tau\ln\tau+\tau+\mathcal{O}(\tau^{2}).
\end{align*}
Utilizing the equality
\[
\lim_{t\rightarrow1}\Im\phi(\zeta,t)=\pi/2,
\]
we deduce that 
\[
n\Delta\arg_{\tau\le t<1}\Im\phi(\zeta,t)=\frac{n\pi}{2}+n\tau\ln\tau-n(\ln2+1)\tau+\mathcal{O}(n\tau^{2}).
\]

We now compute $\Delta\arg_{\tau<t<1}\psi(\zeta(t))$ where 
\[
\psi(\zeta)=\zeta^{-1}(1-\zeta)^{p+c}(1+\zeta)^{p+c}\prod_{k=1}^{N}(1-\alpha_{i}^{2}\zeta^{2})^{p_{i}} \qquad \textrm{(c.f. equation (\ref{eq:psidef})}.
\]
The representation 
\[
\psi(\zeta(\tau))=2^{p+c}(i\tau)^{p+c}\prod_{k=1}^{N}(1-\alpha_{i}^{2})^{p_{i}}(1+\mathcal{O}(\tau)),
\]
along with the limit
\[
\lim_{t\rightarrow1}\psi(\zeta(t))=i(1+i)^{p+c}(1-i)^{p+c}\prod_{k=1}^{N}(1+\alpha_{i}^{2})^{p_{i}}
\]
imply that 
\[
\Delta\arg_{\tau<t<1}\psi(\zeta(t))=\frac{\pi(1-p-c)}{2}+\mathcal{O}(\tau).
\]
Finally, since
\[
\Im\int_{-\epsilon}^{\epsilon}e^{-ny^{2}}z'(y)dy\ge0,
\]
it follows that
\[
\left| \Delta \arg_{\tau <t<1} \int_{-\epsilon}^{\epsilon}e^{-ny^{2}}z'(y)dy \right| \leq \pi.
\]
The proof is thus complete.
\end{proof}
\begin{lem}
\label{lem:changeargsmallt} Suppose $\tau_{1}$ and $\tau_{2}$ are
constant multiples of $e^{-\ln^{4}n}/n$ and $\ln^{4}n/n$ and $\alpha$
is the unique angle such that $-\pi<\alpha\le\pi$ and $(c+p+1/2)\pi=2k\pi+\alpha$
for $k\in\mathbb{Z}$. Then $p(\zeta(t))\ne0$ for $\tau_{1}\le t\le\tau_{2}$
and 
\[
\Delta\arg_{\tau_{1}\le t\le\tau_{2}}p(\zeta(t))=n\tau_{2}\ln2-n\tau_{2}\ln\tau_{2}+n\tau_{2}-\frac{|c+p|\pi}{2}-\frac{\pi}{4}-\eta+\mathcal{O}\left(\frac{1}{\ln^{4}n}\right)
\]
where 
\begin{equation}
\eta=\begin{cases}
-\pi/2 & \text{if }c+p<0\\
0 & \text{ if }c+p\ge0\text{ and }\alpha=\pm\frac{\pi}{2}\\
-\alpha & \text{ if }c+p\ge0\text{ and }-\pi/2<\alpha<\pi/2\\
-\alpha-\pi & \text{ if }c+p\ge0\text{ and }-\pi<\alpha<-\pi/2\\
-\alpha+\pi & \text{ if }c+p\ge0\text{ and }\pi/2<\alpha\le\pi.
\end{cases}.\label{eq:etadef}
\end{equation}
\end{lem}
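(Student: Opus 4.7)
The plan is to apply Lemma~\ref{lem:asympsmallt}, rewrite the resulting expression via the reflection identity, and compute the change of argument factor by factor, with careful attention to the continuous branch of $\log\Gamma$. Writing $F(t)$ for the right-hand side of Lemma~\ref{lem:asympsmallt}, we have $p_n(\zeta(t))=F(t)(1+o(1))$ uniformly on $[\tau_1,\tau_2]$, so $\Delta\arg p_n(\zeta(t))=\Delta\arg F(t)+o(1)$. Applying $\sin(\pi z)\Gamma(z)=\pi/\Gamma(1-z)$ with $z=c+p+1-int$, we rewrite
\[
F(t)=\frac{i\pi\cdot 2^{c+p+1+int}\prod_{k=1}^{N}(1-\alpha_{k}^{2})^{p_{k}}}{\Gamma(-c-p+int)\,n^{c+p+1-int}}.
\]
Since $\Gamma$ has no zeros and its poles are avoided for $t>0$, $F(t)\ne 0$ on $[\tau_1,\tau_2]$, yielding the non-vanishing of $p_n(\zeta(t))$ on this range for $n$ large. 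The $t$-independent factors contribute nothing to $\Delta\arg$, so only the oscillating part $e^{int\ln(2n)}/\Gamma(-c-p+int)$ matters, and
\[
\Delta\arg F=n(\tau_2-\tau_1)\ln(2n)-\Delta\arg\Gamma(-c-p+int)+o(1).
\]

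For the $\Gamma$ factor at $t=\tau_2$, Stirling's formula $\log\Gamma(z)=(z-\tfrac12)\log z-z+\tfrac12\log 2\pi+O(1/|z|)$ applied with $z=-c-p+in\tau_2$ (so $|z|\asymp n\tau_2\gg 1$ and $\arg z\to\pi/2$) gives
\[
\arg\Gamma(-c-p+in\tau_2)=n\tau_2\ln(n\tau_2)-n\tau_2-\tfrac{(c+p+1/2)\pi}{2}+O(1/\ln^4 n)
\]
in the branch of $\log\Gamma$ obtained by continuous analytic continuation from the positive real axis through the upper half-plane. To use this, I must also determine the value $\Lambda:=\lim_{y\to 0^+}\arg\Gamma(-c-p+iy)$ in the same branch. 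Combining the above with the identity $n\tau_2\ln n-n\tau_2\ln(n\tau_2)=-n\tau_2\ln\tau_2$ and using $n\tau_1\ln(2n)=o(1)$, the calculation collapses to
\[
\Delta\arg F=n\tau_2\ln 2-n\tau_2\ln\tau_2+n\tau_2+\tfrac{(c+p+1/2)\pi}{2}+\Lambda+O(1/\ln^4 n).
\]

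The final step is to identify $\Lambda$ and match with the claimed form. A direct continuation argument shows that each pole of $\Gamma$ at $\{0,-1,-2,\ldots\}$ crossed by the continuation from $z=1$ contributes a decrease of $\pi$ to $\arg\Gamma$. Thus $\Lambda=0$ when $c+p<0$ (no poles crossed); $\Lambda=-(k+1)\pi$ when $c+p\in(k,k+1)$ for integer $k\ge 0$ (the $k+1$ poles at $0,-1,\ldots,-k$ are crossed); and $\Lambda=-k\pi-\pi/2$ when $c+p=k\in\mathbb{Z}_{\ge 0}$, the additional $-\pi/2$ coming from the local expansion $\Gamma(-k+iy)\sim(-1)^k/(k!\,iy)$. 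Substituting, the case $c+p<0$ directly gives $\eta=-\pi/2$; for $c+p\in(k,k+1)$ one obtains $\eta=(k+1-(c+p))\pi-\pi/2$, which equals $-\alpha+\pi$, $-\alpha-\pi$, or $-\alpha$ according as $\alpha=(c+p+1/2)\pi\bmod 2\pi$ lies in $(\pi/2,\pi]$, $(-\pi,-\pi/2)$, or $(-\pi/2,\pi/2)$; and the integer case $c+p=k$ gives $\eta=0$, matching the clause $\alpha=\pm\pi/2$.

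I expect the principal obstacle to lie in this branch-tracking bookkeeping: one must verify that the Stirling formula at $y=n\tau_2$ returns the value in the same analytic continuation used to define $\Lambda$, without spurious $2\pi$ windings accruing along the intermediate segment $y\in(n\tau_1,n\tau_2)$, and then correctly package the residual constant as the piecewise $\eta$ of \eqref{eq:etadef} in terms of the principal-value representative $\alpha\in(-\pi,\pi]$. The integer case requires particular care, since $\Gamma$ has a pole at $-c-p$ and $\Lambda$ is governed by the direction of approach rather than a genuine limit of $\Gamma$.
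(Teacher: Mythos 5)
Your proof is correct and follows essentially the same route as the paper: apply the asymptotic of Lemma~\ref{lem:asympsmallt}, rewrite via the reflection formula so the $\Gamma$ factor sits in the denominator, and compute the change of argument factor by factor, with Stirling supplying the dominant terms of $\Delta\arg\Gamma(-c-p+int)$ (the paper defers this $\Gamma$ computation to Lemma~12 of \cite{cft}, whereas you carry it out directly, including the correct branch count $\Lambda$ and its translation into the piecewise $\eta$). The branch-tracking worry you flag at the end is benign: both the Stirling estimate and the limit $\Lambda$ refer to the principal branch of $\log\Gamma$ on $\mathbb{C}\setminus(-\infty,0]$, and the vertical half-line $\{-c-p+iy : y>0\}$ is a simply connected subset of that slit plane containing no poles, so no spurious $2\pi$ windings can accumulate between $y=n\tau_1$ and $y=n\tau_2$.
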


\begin{proof}
Using Lemma 10 in \cite{cft} once more we see that as $n\rightarrow\infty$,
\[
p(\zeta(t))\sim\frac{i\pi2^{c+p+1+int}\prod_{k=1}^{N}(1-\alpha_{i}^{2})^{p_{i}}}{\Gamma(-c-p+int)n^{c+p+1-int}}
\]
uniformly on $\tau_{1}\le t\le\tau_{2}$. The change of argument for
$2^{c+p+1+int}$and $n^{c+p+1-int}$when $\tau_{1}\le t\le\tau_{2}$
are $n(\tau_{2}-\tau_{1})\ln2$ and $-n(\tau_{2}-\tau_{1})\ln n$
respectively. Following the computations in the proof
of Lemma 12 in \cite{cft}, we arrive at the equation 
\[
\Delta\arg_{\tau_{1}\le t\le\tau_{2}}\Gamma(-c-p+int)=n\tau_{2}\ln(n\tau_{2})-n\tau_{2}+\frac{|c+p|\pi}{2}+\frac{\pi}{4}+\eta+\mathcal{O}\left(\frac{1}{\ln^{4}n}\right)
\]
from which the result follows. 
\end{proof}
As a consequence of Lemmas \ref{lem:changeargglobal} and \ref{lem:changeargsmallt}
(where $\tau$ is a constant multiple of $\ln^{4}n/n$), we conclude
that 
\begin{equation}
\Delta\arg_{e^{-\ln^{4}n}/n\ll t<1}p(\zeta(t))=-\frac{|c+p|\pi}{2}-\frac{\pi}{4}-\eta+\frac{n\pi}{2}+\frac{\pi(1-p-c)}{2}+C+o(1).\label{eq:changearg}
\end{equation}
 
The completion of the proof of Theorem \ref{thm:pn_generalization}, i.e. the claim about the location of the zeros of the polynomials $p_n(t)$, is obtained using arguments completely analogous to those in \cite{cft}, after comparing the right hand side of equation \eqref{eq:changearg} to the absolute value of (3.21) in \cite{cft}.

\subsection{The limiting zero distribution density function}

In this section, we compute the limiting distribution density function $D(x)$
of the zeros of $h_{n}(c-int)$ on $t\in(0,1)$. This function at
$x\in(0,1)$ is given by 
\[
\lim_{\epsilon\rightarrow0}\frac{1}{\epsilon}\lim_{n\rightarrow\infty}\frac{N_{n,\epsilon}(x)}{n}
\]
where $N_{n,\epsilon}(x)$ denote the number of zeros of $h_{n}(c-int)$
on the interval $t\in(x,x+\epsilon)$. We recall from Lemma \ref{lem:globalasymp}
that for any $x\in(0,1)$ and small $\epsilon$ 
\[
p_{n}(\zeta(t))\sim\psi(-i)e^{n\phi(\zeta,t)}\int_{-\epsilon}^{\epsilon}e^{-ny^{2}}z'(y)dy
\]
uniformly on $t\in(x,x+\epsilon)$. Since 
\[
\Im\int_{-\epsilon}^{\epsilon}e^{-ny^{2}}z'(y)dy>0,
\]
we have 
\[
\Delta\arg_{x<t<x+\epsilon}p_{n}(\zeta(t))=n\Delta\Im_{x<t<x+\epsilon}\phi(\zeta,t)+C
\]
for some $|C|<\pi$. It is immediate from the Taylor expansion of
$\phi(\zeta,\cdot)$ about $x$ that 
\[
\left.\phi(\zeta,t)\right|_{t=x}^{t=x+\epsilon}=\left.\frac{d\phi(\zeta,t)}{dt}\right|_{t=x}\epsilon+\mathcal{O}(\epsilon^{2}),
\]
where $\phi(z,t)=it\Log(1+z)-it\Log(1-z)-\Log z$.
\begin{align*}
\left.\frac{d\phi}{dt}\right|_{t=x} & =\left.\frac{\partial\phi}{\partial\zeta}\right|_{t=x}\left.\frac{d\zeta}{dt}\right|_{t=x}+\left.\frac{\partial\phi}{\partial t}\right|_{t=x}\\
 & =\left.\frac{\partial\phi}{\partial t}\right|_{t=x}\\
 & =i\left(\Log(1+\zeta(x))-\Log(1-\zeta(x))\right).
\end{align*}
Since $\pi h_{n}(c-int)$ is the imaginary part, or $-i$ times the
real part of 
\[
p_{n}(t)=\int_{\Gamma_{2}}\psi(z)e^{n\phi(z,t)}dz,
\]
we conclude that 
\begin{align*}
\lim_{\epsilon\rightarrow0}\frac{1}{\epsilon}\lim_{n\rightarrow\infty}\frac{N_{n,\epsilon}(x)}{n} & =\lim_{\epsilon\rightarrow0}\frac{1}{\epsilon}\lim_{n\rightarrow\infty}\frac{\left|\Delta\arg_{x<t<x+\epsilon}p_{n}(\zeta(t))\right|}{\pi n}\\
 & =\frac{1}{\pi}\ln\left|\frac{1+\zeta(x)}{1-\zeta(x)}\right|.
\end{align*}
Substituting $\zeta(x)=-ix+\sqrt{1-x^{2}}$, we obtain the following limiting
distribution density function (see Figure \ref{fig:limitdensity})
\[
D(x)=\frac{1}{2\pi}\ln\frac{1+\sqrt{1-x^{2}}}{1-\sqrt{1-x^{2}}}.
\]

\begin{figure}

\begin{centering}
\includegraphics[width=3 in]{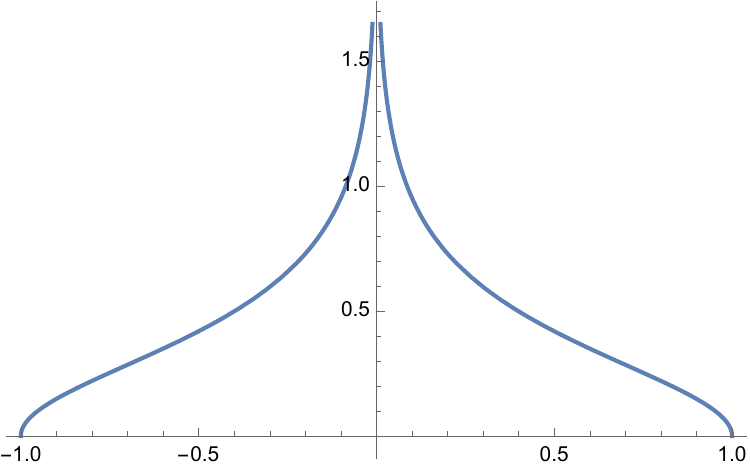}
\par\end{centering}
\caption{\label{fig:limitdensity}The limiting zero distribution density function}

\end{figure}
\section*{Appendix}
The work in this Appendix is dedicated to showing that for any $y\in(-\infty,L)$, $\Im z'(y)>0$. This result is used in establishing asymptotic estimates of the tails of the integral representations of the $p_n(t)$. As such, it is essential to the main theorem, but the tools used to prove the result are elementary and somewhat tedious.
We begin by differentiating both sides of (\ref{eq:zyfunc}) with respect to
$y$ and obtain 
\begin{equation}
z'(y)=-\frac{2y}{\phi_{z}(z(y),t)}=-\frac{2 y(\Re \phi_z(z(y),t)-i \Im \phi_z (z(y),t))}{|\phi_z(z(y),t)|^2}.\label{eq:zderiv}
\end{equation}
The fact that $\Im z'(y)>0$ follows from the fact that $y$ and $\Im \phi_z(z(y),t)$ carry the same sign when $y \neq 0$. The claim is still true when $y=0$, although we don't really need it, since the estimates we use are all inside integrals.  \\
We first treat the case when $y>0$.
\begin{lem}
For any $y\in(0,L)$ and $t\in(0,1)$, $\Im\phi_{z}(z(y),t)>0$. 
\end{lem}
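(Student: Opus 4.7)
The plan is to derive a clean expression for the logarithmic derivative $\phi_{zz}/\phi_z$ and use it to control how $\Im\phi_z$ evolves along the curve $z(y)$. Starting from the factorization $\phi_z(z,t)=(z-\zeta_+)(z-\zeta_-)/[z(1-z)(1+z)]$ with $\zeta_\pm=-it\pm\sqrt{1-t^2}$ (so $\zeta_+=\zeta$ and $\zeta_-$ lies in the third quadrant), the logarithmic derivative is
\[
\frac{\phi_{zz}(z,t)}{\phi_z(z,t)}=\frac{1}{z-\zeta_+}+\frac{1}{z-\zeta_-}-\frac{1}{z}+\frac{1}{1-z}-\frac{1}{1+z}.
\]
Writing $z=u+iv$, the imaginary parts of the five summands are, in order, $-(v+t)/|z-\zeta_+|^2$, $-(v+t)/|z-\zeta_-|^2$, $v/|z|^2$, $v/|1-z|^2$, and $v/|1+z|^2$. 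Hence whenever $-t<v<0$, every summand is strictly negative, so $\Im(\phi_{zz}/\phi_z)(z,t)<0$.

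Next, since $z'(y)=-2y/\phi_z$, differentiation along the curve yields
\[
\frac{d}{dy}\Im\phi_z(z(y),t)=\Im\!\left(\phi_{zz}(z(y),t)\,z'(y)\right)=-2y\,\Im\frac{\phi_{zz}(z(y),t)}{\phi_z(z(y),t)}.
\]
Combined with the previous observation, this says: whenever $z(y)$ satisfies $-t<\Im z(y)<0$, the function $y\mapsto\Im\phi_z(z(y),t)$ is strictly increasing for $y>0$. A local analysis at $y=0^+$, using the expansion of $z(y)$ from Lemma~\ref{lem:zycurve} together with the fact that $\Re\sqrt{\phi_{zz}(\zeta,t)}>0$ (a consequence of Lemma~\ref{lem:phi2zeta}), shows $\Im\phi_z(z(y),t)>0$ for all sufficiently small $y>0$, giving the base case.

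The conclusion then follows by contradiction. Suppose $y_0\in(0,L)$ is the smallest value at which $\Im\phi_z(z(y_0),t)=0$. On $(0,y_0)$, $\Im\phi_z>0$, so $\Im z'(y)=2y\,\Im\phi_z/|\phi_z|^2>0$, and thus $v(y)=\Im z(y)$ is strictly increasing on $(0,y_0)$. Consequently $v(y_0)>v(0)=-t$. Moreover $v(y_0)<0$, since $z(y_0)$ lies in the open fourth quadrant by Lemma~\ref{lem:zycurve}. The range $-t<v(y_0)<0$ forces $\Im(\phi_{zz}/\phi_z)(z(y_0),t)<0$, which gives $(d/dy)\Im\phi_z(z(y),t)|_{y=y_0}>0$. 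On the other hand, $\Im\phi_z$ descends from positive values down to $0$ as $y\uparrow y_0$, so this derivative must be $\le 0$, a contradiction.

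The main obstacle is the interplay between the two sign constraints on $v$: the bound $v<0$ is automatic from Lemma~\ref{lem:zycurve}, whereas the bound $v>-t$ is essentially circular, as it is precisely the monotonicity of $\Im z$ that we are trying to establish. The contradiction scheme threads this needle by invoking the minimality of $y_0$ to guarantee $v(y_0)>-t$, at which point the explicit sign computation for $\Im(\phi_{zz}/\phi_z)$ closes the argument.
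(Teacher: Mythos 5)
Your proposal is correct, and it takes a genuinely different route from the paper's.

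The paper's proof also starts from the same base case ($\Im\phi_z>0$ for small $y>0$) and also argues by contradiction from a first zero $y_0$. But from there the two arguments diverge. The paper constructs an auxiliary curve $\gamma$ by inverting $y=\phi_z(\gamma,t)-\Re\phi_z(z(y_0),t)$, shows $\gamma$ terminates at $\zeta$, and extracts, via two separate Mean Value Theorem applications, points where both $\Im\phi_z$ and one of $\Re\phi_{z^2}$, $\Im\phi_{z^2}$ vanish. Each vanishing condition is turned (with a CAS) into a high-degree polynomial constraint in $u,v,t$, and a Lagrange-multiplier optimization then produces two \emph{numerical} bounds ($t\gtrsim0.924$ and $t\lesssim0.707$) that are incompatible. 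Your argument avoids all of this. The partial-fraction decomposition
\[
\frac{\phi_{zz}}{\phi_z}=\frac{1}{z-\zeta_+}+\frac{1}{z-\zeta_-}-\frac{1}{z}+\frac{1}{1-z}-\frac{1}{1+z}
\]
is exact, each term has an imaginary part of a transparent sign, and the bootstrap (minimal $y_0$ $\Rightarrow$ $\Im z$ increasing on $(0,y_0)$ $\Rightarrow$ $-t<v(y_0)<0$ $\Rightarrow$ strict sign of $\Im(\phi_{zz}/\phi_z)$ $\Rightarrow$ derivative contradiction) closes cleanly for all $t\in(0,1)$. What the paper's route buys is nothing extra here; yours buys a fully analytic, CAS-free, and shorter proof. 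One small point worth stating explicitly: at $y_0\in(0,L)$ you need $\phi_z(z(y_0),t)\neq 0$ and the denominators $|z|$, $|1\pm z|$ nonzero, so that the log-derivative is defined. These follow from the facts, established in Lemma~\ref{lem:zycurve}, that $z(y)$ lies in the open fourth quadrant with $|z(y)|<1$ for $y\in(0,L)$, while the only zeros $\zeta_\pm$ of $\phi_z$ lie on the unit circle.
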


\begin{proof}
We note that for small $y$ 
\[
\phi_{z}(z(y),t)=\phi_{z^{2}}(\zeta,t)(z(y)-\zeta)+\mathcal{O}(z(y)-\zeta)^{2}.
\]
We deduce from Lemma \ref{lem:phi2zeta} and equation \eqref{eq:zsmally} that
$\Im\phi_{z}(z(y),t)>0$ for small $y$. Suppose now by way of contradiction, that
$\Im\phi_{z}(z(y),t)>0$ does not hold for all $y>0$. Then by the
Intermediate Value Theorem, there exists $y_{0}>0$ such that $\Im\phi_{z}(z(y_{0}),t)=0$.
We invert the relation 
\[
y=\phi_{z}(z,t)-\Re\phi_{z}(z(y_{0}),t)
\]
to obtain a function $\gamma(y)$ in a neighborhood of $0$ such that 
\begin{equation} \label{eq:ygamma}
y=\phi_{z}(\gamma(y),t)-\Re\phi_{z}(z(y_{0}),t)
\end{equation}
and $\gamma(0)=z(y_{0})$. We apply analytic continuation arguments
similar to those in Lemma \ref{lem:zycurve} to obtain a curve $\gamma(y)$,
$0<y<\tilde L$ for $0< \tilde L\le\infty$, inside the open
unit ball in the fourth quadrant.
\begin{figure}[htbp]
\begin{center}
\includegraphics[width=3 in]{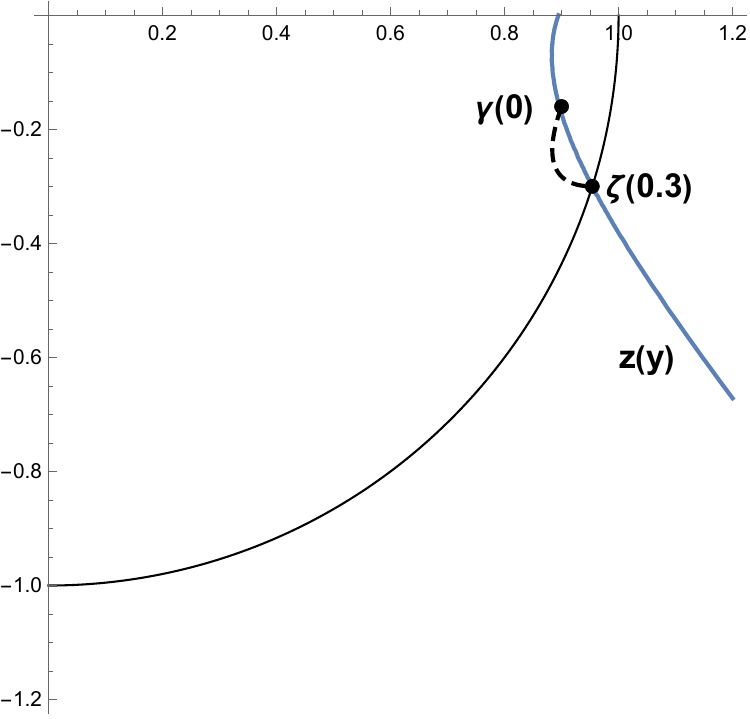}
\caption{The three essential curves in our proof: the $\zeta$ curve, the $z(y)$ curve and the $\gamma(y)$ curve}
\label{fig:gammacurve}
\end{center}
\end{figure}

We note that $\gamma(\tilde L):=\lim_{y\rightarrow \tilde L}\gamma(y)$ must lie
on the boundary of this region which is the union of $[0,1]$, $i[-1,0]$,
and the circular arc $e^{i\theta}$, $-\pi/2<\theta<0$. \\
We claim that
$\gamma(\tilde L)=\zeta$. (See the dashed line in Figure \ref{fig:gammacurve}) First note that $\gamma(\tilde L)\notin(0,1)$ since
if $\gamma(\tilde L)\in(0,1)$, then on the one hand $\Im \phi_z(\gamma(\tilde L),t)=0$, since $\phi_z$ is real along the curve $\gamma(y)$. On the other hand,
\[
\Im\phi_{z}(\gamma(\tilde L),t)=\Im\left(\frac{it}{1+\gamma(\tilde L)}+\frac{it}{1-\gamma(\tilde L)}-\frac{1}{\gamma(\tilde L)}\right)\ne0.
\]
Similarly $\gamma(\tilde L)\notin i[-1,0)$ since if $\gamma(\tilde L)=iv$ for
$v\in[-1,0)$, then 
\begin{align*}
0=\Im\phi_{z}(\gamma(\tilde L),t) & =\Im\left(\frac{it}{1+iv}+\frac{it}{1-iv}-\frac{1}{iv}\right)\\
 & =\frac{2t}{1+v^{2}}+\frac{1}{v}\\
 & =\frac{1+v^{2}+2tv}{v(1+v^{2})}<0.
\end{align*}
We also have $\gamma(\tilde L)\ne0$ since if $\gamma(\tilde L)=0$, then taking
the limit of both sides of equation \eqref{eq:ygamma} as $y\rightarrow \tilde L$ to obtain 
\[
\tilde L=2it-\lim_{y\rightarrow \tilde L}\frac{1}{\gamma(y)}-\Re\phi_{z}(z(y_{0}),t).
\]
Since the right side approaches $\infty$, we conclude that $L=+\infty$.
Taking the real part of both sides above we obtain a contradiction
since 
\[
\Re\frac{1}{\gamma(y)}=\frac{\Re\gamma(y)}{|\gamma(y)|^{2}}>0.
\]
It follows that $\gamma(\tilde L)=u+iv$ lies on the circular arc $z=e^{i \theta}$, $-\pi/2 <\theta<0$. One readily computes
\[
0=\Im\phi_{z}(\gamma(\tilde L),t)=t+v.
\]
Hence $v=-t$ and $\Im (\zeta(\tilde L))=\Im \zeta$, and hence $\gamma(\tilde L)=\zeta$. This establishes the claim.\\
We continue by noting that
\[
\Im\phi(\gamma(0),t)=\Im\phi(z(y_{0}),t)=\Im\phi(\zeta,t)
\]
by the definition of the $z(y)$ curve, and 
\[
\Im\phi(\gamma(L),t)=\Im\phi(\zeta,t).
\]
An application of the Mean Value Theorem, yields $y_{1}\in(0,L)$ such that 
\begin{equation}
\frac{d}{dy} \Im \phi(\gamma(y),t) \Big|_{y=y_1}=\Im\left(\phi_{z}(\gamma(y_{1}),t)\gamma'(y_{1})\right)=\Im \phi_z(\gamma(y_{1}),t) \Re \gamma'(y_1)+\Re \phi_z(\gamma(y_{1}),t) \Im \gamma'(y_1)=0.\label{eq:Imphigamma}
\end{equation}
Recall that $\Im\phi_{z}(\gamma(y_{1}),t)=0$ and $\Re\phi_{z}(\gamma(y_{1}),t)\ne0$
(as $\zeta$ is the only solution of $\phi_{z}(z,t)=0$ on the fourth
quadrant). Therefore equation \eqref{eq:Imphigamma} implies that 
\[
\Im\gamma'(y_{1})=0.
\]
We now differentiate both sides of equation \eqref{eq:ygamma}
\[
y=\phi_{z}(\gamma(y),t)-\Re\phi_{z}(z(y_{0}),t)
\]
with respect $y$ to obtain 
\begin{equation}
\gamma'(y)=\frac{1}{\phi_{z^{2}}(\gamma(y),t)},\label{eq:gammaderiv}
\end{equation}
and consequently 
\[
\Im\phi_{z^{2}}(\gamma(y_{1}),t)=0.
\]
Write $\gamma(y_{1})=u+iv$. Using a CAS we find that the equations
$\Im\phi_{z}(\gamma(y_{1}),t)=0$ and $\Im\phi_{z^{2}}(\gamma(y_{1}),t)=0$
give 
\[
2tu^{4}-2tu^{2}-2tv^{4}-2tv^{2}-u^{4}v-2u^{2}v^{3}+2u^{2}v-v^{5}-2v^{3}-v=0
\]
and 
\begin{align*}
2tu^{4}-4tu^{6}+2tu^{8}-v+4u^{2}v-6u^{4}v+4u^{6}v-u^{8}v+4tu^{2}v^{2}-12tu^{4}v^{2}-4v^{3}+4u^{2}v^{3}+4u^{4}v^{3}\\
-4u^{6}v^{3}+2tv^{4}-12tu^{2}v^{4}-12tu^{4}v^{4}-6v^{5}-4u^{2}v^{5}-6u^{4}v^{5}-4tv^{6}-16tu^{2}v^{6}-4v^{7}-4u^{2}v^{7}-6tv^{8}-v^{9} & =0.
\end{align*}
We solve the first equation for $t$: 
\begin{equation}
t=\frac{u^{4}v+2u^{2}v^{3}-2u^{2}v+v^{5}+2v^{3}+v}{2\left(u^{4}-u^{2}-v^{4}-v^{2}\right)}\label{eq:tvalue}
\end{equation}
Substituting into the second equation we obtain
\[
2v\left(u^{2}+v^{2}\right)\left(u^{2}-2u+v^{2}+1\right)\left(u^{2}+2u+v^{2}+1\right)\left(2u^{4}v^{2}-u^{4}+4u^{2}v^{4}+6u^{2}v^{2}+2u^{2}+2v^{6}-v^{4}-4v^{2}-1\right)=0,
\]
or equivalently 
\begin{equation}
2u^{4}v^{2}-u^{4}+4u^{2}v^{4}+6u^{2}v^{2}+2u^{2}+2v^{6}-v^{4}-4v^{2}-1=0.\label{eq:Imphi2}
\end{equation}
We apply the Lagrange multiplier method to find the minimum value of 
\[
t=\frac{u^{4}v+2u^{2}v^{3}-2u^{2}v+v^{5}+2v^{3}+v}{2\left(u^{4}-u^{2}-v^{4}-v^{2}\right)}
\]
under the conditions (\ref{eq:Imphi2}), $u^{2}+v^{2}\le1$, and $0\le t\le1$
and find that the numerical value of this minimum value is $0.924256...$
We now look for an additional constraint on the value of $t$. To this end, note that 
\[
\Re\phi(\gamma(0),t)=\Re\phi(z(y_{0}),t)<\Re\phi(\zeta,t)
\]
by the definition of $z(y)$ curve. Moreover, as $y\rightarrow \tilde L$
\[
\Re\phi(\gamma(y),t)\sim\Re\left(\phi(\zeta,t)+\frac{1}{2}\phi_{z^{2}}(\zeta,t)\gamma'(\zeta)^{2}(y-\tilde L)^{2}\right).
\]
We conclude from (\ref{eq:gammaderiv}) that for $y$ sufficiently
close to $\tilde L$, $\Re\phi(\gamma(y),t)>\Re\phi(\zeta,t)$. By the Intermediate
Value Theorem, there is $y^{*}\in(0,\tilde L)$ such that $\Re\phi(\gamma(y^{*}),t)=\Re\phi(\zeta,t)$.
Together with $\Re\phi(\gamma(\tilde L),t)=\Re\phi(\zeta,t)$, we apply Mean
Value Theorem to conclude that there is $y_{1}^{*}\in(y^{*},L)$ such that
\[
\Re\left(\phi_{z}(\gamma(y_{1}^{*}),t)\gamma'(y_{1}^{*})\right)=0.
\]
We apply the same arguments as those in (\ref{eq:Imphigamma}) to
conclude $\Im\phi_{z}(\gamma(y_{1}^{*}),t)=0$ and $\Re\phi_{z^{2}}(\gamma(y_{1}^{*}),t)=0$.
Letting $\gamma(y_{1}^{*})=u+iv$, we substitute (\ref{eq:tvalue})
to $\Re\phi_{z^{2}}(\gamma(y_{1}^{*}),t)=0$ and obtain 
\[
-u^{8}-6u^{6}v^{2}+3u^{6}-8u^{4}v^{4}-u^{4}v^{2}-3u^{4}-2u^{2}v^{6}+13u^{2}v^{4}+8u^{2}v^{2}+u^{2}+v^{6}-v^{4}-v^{2}+v=0.
\]
Using the Lagrange multiplier method once more, we find the numerical maximum value of
$t$ under this condition and $u^{2}+v^{2}\le1$ and $0\le t\le1$
to be $0.707107...$. Thus there are no values of $t$ under which the conditions implied by our assumptions hold. 
\end{proof}
Next we treat the case of negative $y$.
\begin{lem}
\label{lem:negyImphi_z}For any $y\in(-\infty,0)$ and $t\in(0,1)$,
$\Im\phi_{z}(z(y),t)<0$. 
\end{lem}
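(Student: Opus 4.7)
The plan is to mirror the argument used for the positive-$y$ case, adapting it to the region outside the unit disk (recall that for $y<0$, equation \eqref{eq:zymod} and the ensuing analysis show $|z(y)|>1$, so the relevant geometric region in the fourth quadrant is now the exterior of the closed unit disk). I would begin with the local check: for $y<0$ with $|y|$ small, the expansion
\[
\phi_{z}(z(y),t)=\phi_{z^{2}}(\zeta,t)(z(y)-\zeta)+\mathcal{O}\bigl((z(y)-\zeta)^{2}\bigr),
\]
combined with equation \eqref{eq:zsmally} and Lemma \ref{lem:phi2zeta}, yields $\Im \phi_{z}(z(y),t)<0$ in a left-neighborhood of $0$. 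This provides the sign at the ``starting end'' of the interval $(-\infty,0)$.

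Next, I would argue by contradiction: assume there exists $y_{0}\in(-\infty,0)$ with $\Im \phi_{z}(z(y_{0}),t)=0$, and run through the same construction as in the preceding lemma. Invert the relation $y=\phi_{z}(\gamma(y),t)-\Re \phi_{z}(z(y_{0}),t)$ near $y=0$ to obtain an analytic branch $\gamma$ with $\gamma(0)=z(y_{0})$, then analytically continue along a maximal real interval $(0,\tilde L)$ as long as $\gamma(y)$ stays in the open fourth-quadrant exterior $\{z:\Re z>0,\ \Im z<0,\ |z|>1\}$. The endpoint $\gamma(\tilde L)$ must lie on the boundary of that region, which is the union of $(1,+\infty)$, $-i[1,+\infty)$, the unit-circle arc $\{e^{i\theta}:-\pi/2<\theta<0\}$, and $\infty$. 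Each of the first two possibilities is excluded by a direct sign computation of $\Im \phi_{z}$ (on $(1,\infty)$ one has $\Im \phi_{z}(u,t)\neq 0$; on $-iv$ with $v\geq 1$ a rational-function sign check rules it out), and the case $\gamma(\tilde L)=\infty$ forces $\tilde L=+\infty$ and $\Re \phi_{z}(z(y_{0}),t)=0$, which together with $\Im \phi_{z}(z(y_{0}),t)=0$ contradicts the fact that $\zeta$ is the unique zero of $\phi_{z}(\cdot,t)$ in the fourth quadrant. Hence $\gamma(\tilde L)$ lies on the unit arc, and the same tangential calculation as in the previous lemma identifies it as $\zeta$.

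With both endpoints understood, I would apply the Mean Value Theorem twice along the curve $\gamma$, exactly as in the preceding proof. First, since $\Im \phi(\gamma(0),t)=\Im \phi(z(y_{0}),t)=\Im \phi(\zeta,t)=\Im \phi(\gamma(\tilde L),t)$, there exists $y_{1}\in(0,\tilde L)$ with $\Im\bigl(\phi_{z}(\gamma(y_{1}),t)\gamma'(y_{1})\bigr)=0$, and because $\Im \phi_{z}(\gamma,t)\equiv 0$ along $\gamma$ while $\Re \phi_{z}(\gamma,t)\neq 0$ (else $\gamma=\zeta$), we get $\Im \gamma'(y_{1})=0$ and hence $\Im \phi_{z^{2}}(\gamma(y_{1}),t)=0$ via \eqref{eq:gammaderiv}. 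Second, comparing $\Re \phi(\gamma(0),t)<\Re \phi(\zeta,t)$ with the local growth of $\Re \phi$ near $\gamma(\tilde L)=\zeta$ yields some $y_{1}^{*}\in(0,\tilde L)$ with $\Re \phi_{z^{2}}(\gamma(y_{1}^{*}),t)=0$.

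Finally, writing the intermediate point $u+iv$, the conditions $\Im \phi_{z}=\Im \phi_{z^{2}}=0$ and $\Im \phi_{z}=\Re \phi_{z^{2}}=0$ give the same polynomial systems in $(u,v,t)$ as in the positive-$y$ case. Solving $\Im \phi_{z}=0$ for $t$ produces expression \eqref{eq:tvalue}, and substituting into each of the two vanishing-second-derivative equations yields the same two polynomial constraints in $(u,v)$. The only change is the feasibility region: rather than optimizing over $u^{2}+v^{2}\leq 1$, I would run the two Lagrange-multiplier optimizations over $u^{2}+v^{2}\geq 1$, $u>0$, $v<0$, and $0\leq t\leq 1$. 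The main obstacle here is verifying that the resulting numerical extrema of $t$ under the two constraints are again incompatible (i.e.\ the minimum of $t$ compatible with $\Im \phi_{z^{2}}=0$ on the exterior region exceeds the maximum of $t$ compatible with $\Re \phi_{z^{2}}=0$ there). Once that numerical gap is confirmed—exactly as in the preceding lemma—the contradiction is complete, so no such $y_{0}$ exists and $\Im \phi_{z}(z(y),t)<0$ throughout $y\in(-\infty,0)$.
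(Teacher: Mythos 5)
Your proposal diverges substantially from the paper's actual argument, and it has a genuine gap. The paper does \emph{not} mirror the positive-$y$ proof for the negative-$y$ case. Instead it exploits the sign of $-y^{2}$ in \eqref{eq:zyfunc} directly: since $y<0$ gives $\Re\phi(z,t)-\Re\phi(\zeta,t)<0$, one obtains (after some angle geometry with the triangle on vertices $0$, $1-z$, $1+z$) the explicit upper bound $t<\frac{|z-1||z+1|\ln|z|}{|z|^{2}-1}$. Plugging that into the expression \eqref{eq:Imphizineq} for $\Im\phi_{z}(z,t)$ and doing a case split on the sign of $\Re(1-z^{2})$ reduces everything to showing that a two-variable polynomial $g(r,s)$ is negative on an explicit rectangle, which is handled with elementary calculus. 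This direct route also yields the sharpened quantitative estimate in Remark \ref{rem:Imphiznegy}, which Lemma \ref{lem:globalasymp} actually relies on; a bare qualitative statement $\Im\phi_{z}<0$, even if proved, would not suffice there.

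As for the gap in what you wrote: the entire argument culminates in a numerical Lagrange-multiplier check over the exterior region $u^{2}+v^{2}\ge 1$, $u>0$, $v<0$, $0\le t\le1$, and you flag this as ``the main obstacle'' but do not carry it out. There is no a priori reason the gap between the two critical $t$-values persists once the feasible region changes from the (compact) unit quarter-disk to the (unbounded) exterior quarter-plane; the polynomial constraints \eqref{eq:tvalue}, \eqref{eq:Imphi2}, and the $\Re\phi_{z^{2}}=0$ analogue all behave differently as $u^{2}+v^{2}\to\infty$, and the supremum/infimum of $t$ on the unbounded constraint sets may not be attained at all. Without this verification the contradiction never closes. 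A smaller error: you claim the case $\gamma(\tilde L)=\infty$ ``forces $\tilde L=+\infty$,'' but since $\phi_{z}(\gamma,t)\to 0$ as $\gamma\to\infty$, equation \eqref{eq:ygamma} gives $\tilde L=-\Re\phi_{z}(z(y_{0}),t)$, a finite value whenever $\Re\phi_{z}(z(y_{0}),t)<0$; ruling that branch out requires a separate sign argument you have not supplied. I would recommend abandoning the mirroring strategy and following the paper's direct inequality route, which both closes these gaps and produces the refined estimate needed downstream.
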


\begin{proof}
Let $y\in(-\infty,0)$ and $t\in(0,1)$ be given. We use equation
(\ref{eq:zyfunc}) to deduce that for $z:=z(y)$, 
\begin{equation}
\Re\phi(z,t)-\Re\phi(\zeta,t)<0,\label{eq:Rephiineq}
\end{equation}
or equivalently, 
\begin{equation} \label{eq:lem25}
t\left(\frac{\pi}{2}+\Arg(1+z)-\Arg(1-z)\right)+\ln|z|>0.
\end{equation}
Since $y<0$, we see that $|z(y)|>1$ and 
\[
|1-z|^{2}+|1+z|^{2}=2+2|z|^{2}<4|z|^{2}.
\]

\begin{figure}

\begin{centering}
\includegraphics[scale=0.3]{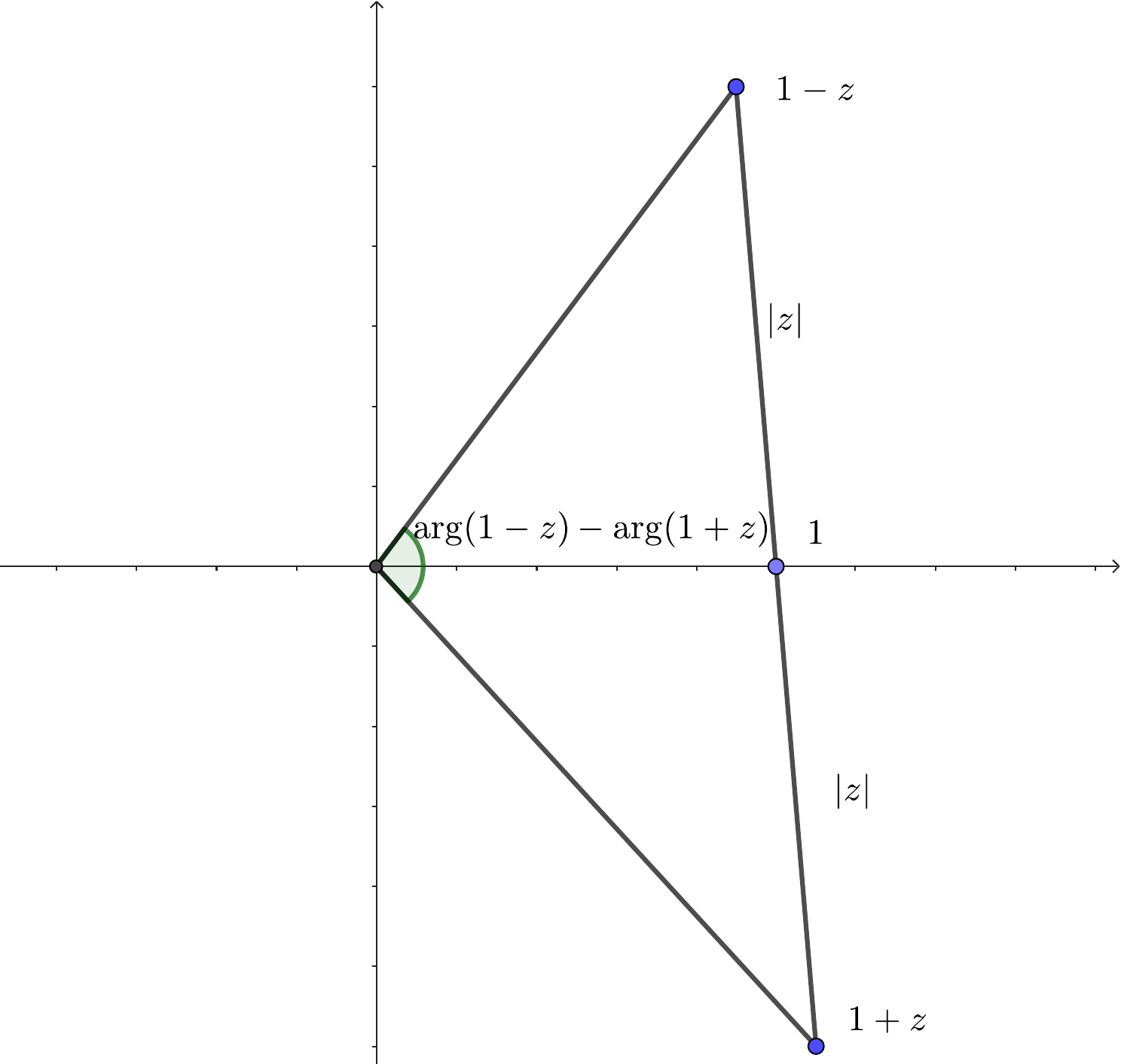}
\par\end{centering}
\caption{\label{fig:triangle}The triangle with vertices $0$, $1-z$, and
$1+z$}

\end{figure}

As Figure \ref{fig:triangle} illustrates, $|1-z|$, $|1+z|$, and
$2|z|$ are the lengths of the three sides of the triangle with vertices
$0,$$1-z$, and $1+z$, and $-\Arg(1+z)+\Arg(1-z)$ is the angle opposite
the the side of length $2|z|$. The above inequality therefore implies
that the angle opposite the side with length $2|z|$ is obtuse: 
\[
\pi>-\Arg(1+z)+\Arg(1-z)>\pi/2.
\]
Combining with equation \eqref{eq:lem25} we get
\begin{align*}
t & <\frac{-\ln|z|}{\frac{\pi}{2}+\Arg(1+z)-\Arg(1-z)}\\
 & <\frac{-\ln|z|}{\sin(\frac{\pi}{2}+\Arg(1+z)-\Arg(1-z))}.
\end{align*}
Using the identities $\sin(\pi/2+x-y)=\cos(x-y)=\cos x\cos y+\sin x\sin y$
we find that 
\begin{align*}
\sin\left(\frac{\pi}{2}+\Arg(1+z)-\Arg(1-z)\right) & =\cos\left(\Arg(1+z)\right)\cos\left(\Arg(1-z)\right)+\sin\left(\Arg(1+z)\right)\sin\left(\Arg(1-z)\right)\\
 & =\frac{1+\Re z}{|1+z|}\cdot\frac{1-\Re z}{|1-z|}-\frac{\Im z}{|1+z|}\cdot\frac{\Im z}{|1-z|}\\
 & =\frac{1-|z|^{2}}{|1-z||1+z|}
\end{align*}
which in turn yields 
\begin{equation}
t<\frac{|z-1||z+1|\ln|z|}{|z|^{2}-1}.\label{eq:tineq}
\end{equation}
With this upper bound on $t$ we calculate a bound on $\Im\phi_{z}(z,t)$. Recall from equation \eqref{eq:phiz} that
\begin{align}
\Im\phi_{z}(z,t) & =t\Re\left(\frac{2}{1-z^{2}}\right)-\Im\left(\frac{1}{z}\right)\nonumber \\
 & =2t\frac{\Re(1-z^{2})}{|z^{2}-1|^2}-\Im\left(\frac{1}{z}\right).\label{eq:Imphizineq}
\end{align}
If $\Re(1-z^{2})\le0$, then $\Im\phi_{z}(z,t)<0$ since $\Im(1/z)>0$
as $z$ lies in the open fourth quadrant. On the other hand if $\Re(1-z^{2})>0$,
then equation \eqref{eq:tineq} implies that
\[
\Im\phi_{z}(z,t)<\frac{2\ln|z|\Re(1-z^{2})}{(|z|^{2}-1)|z^{2}-1|}-\Im\left(\frac{1}{z}\right).
\]
We let 
\[
f(z)=\frac{2\ln|z|\Re(1-z^{2})}{(|z|^{2}-1)|z^{2}-1|}-\Im\left(\frac{1}{z}\right)
\]
with the convention that 
\[
\frac{2\ln|z|}{|z|^{2}-1}=1
\]
for $|z|=1$. It remains to show that $f(z)<0$ for all $z$ in the fourth
quadrant satisfying $|z|>1$ and $\Re(1-z^{2})>0$. To this end, write
$z=re^{i\theta}$, $-\pi/2<\theta<0$, and $r>1$. Then
\begin{equation}
f(z)=f(r,\theta)=\frac{2(\ln r)(1-r^{2}\cos2\theta)}{(r^{2}-1)\sqrt{r^{4}+1-2r^{2}\cos2\theta}}+\frac{\sin\theta}{r}.\label{eq:fzpolar}
\end{equation}
The condition $\Re(1-z^{2})>0$ gives $\cos2\theta=1-2\sin^{2}\theta<1/r^{2}$
or equivalently 
\[
\sin\theta<-\sqrt{\frac{1}{2}-\frac{1}{2r^{2}}}.
\]
We combine the two fractions on the right side of (\ref{eq:fzpolar})
and conclude that $f(z)<0$ if and only if 
\[
2(\ln r)(1-r^{2}\cos2\theta)r<-\sin\theta(r^{2}-1)\sqrt{r^{4}+1-2r^{2}\cos2\theta}.
\]
Since both sides of the inequality above are positive, we compare
the squares of both sides and it suffices to show
\[
g(r,s):=4\ln^{2}r(1-r^{2}(1-2s))^{2}r^{2}-s(r^{2}-1)(r^{4}+1-2r^{2}(1-2s))<0
\]
where $r>1$ and 
\[
\frac{1}{2}-\frac{1}{2r^{2}}<s:=\sin^{2}\theta<1.
\]
Since 
\[
\frac{\partial g}{\partial s}=-r^{8}-8r^{6}s+32r^{6}s\ln^{2}r+4r^{6}-16r^{6}\ln^{2}r+16r^{4}s-6r^{4}+16r^{4}\ln^{2}r-8r^{2}s+4r^{2}-1
\]
is linear in $s$, $g(r,s)$ has a most one critical value as a function
in $s$ on $[1/2-1/(2r^{2}),1]$. One easily checks that 
\[
\left.\frac{\partial g}{\partial s}\right|_{(r,1/2-1/(2r^{2}))}=-(r-1)^{3}(r+1)^{3}\left(r^{2}+3\right)<0
\]
and consequently $g(r,s)$ cannot attain its maximum value at that
potential critical point (if such exists). We conclude that for each
$r$, the maximum value of $g(r,s)$ occur at either $g(r,1)$ or
$g(r,1/2-1/(2r^{2})]$. Finally, since for any $r>1$
\begin{align}
g(r,1) & =-\left(r^{2}+1\right)^{2}\left(r^{2}-2r\log(r)-1\right)\left(r^{2}+2r\log(r)-1\right)<0\label{eq:grright}\\
g(r,1/2-1/(2r^{2})) & =-\frac{(r-1)^{4}(r+1)^{4}\left(r^{2}+1\right)}{2r^{2}}<0\label{eq:grleft}
\end{align}
we conclude that $g(r,s)<0$ for all $r>1$ and $s\in[1/2-1/(2r^{2})]$.
Thus, for all $y\in(-\infty,0)$ and $t\in(0,1)$, $\Im\phi_{z}(z(y),t)<0$,
as desired. 
\end{proof}
\begin{rem}
\label{rem:Imphiznegy}In the case $y<0$, we can slightly improve
the conclusion of Lemma \ref{lem:negyImphi_z} by the conclusion that
\[
\Im\phi_{z}(z,t)\le\min\left(-\frac{1}{2}\Im\left(\frac{1}{z}\right),-\frac{y^{2}}{4t(|z|^{2}-1)}\Im\left(\frac{1}{z}\right)\right)<0.
\]
Indeed, if we replace (\ref{eq:Rephiineq}) 
\[
\Re\phi(z,t)-\Re\phi(\zeta,t)=-y^{2},
\]
then (\ref{eq:tineq}) becomes 
\[
t<\frac{|z-1||z+1|(\ln|z|-y^{2})}{|z|^{2}-1}
\]
from which we have $\ln|z|>y^{2}$. Now, if 
\[
\Re(1-z^{2})\le\frac{|z^{2}-1|}{4t}\Im\left(\frac{1}{z}\right),
\]
then from (\ref{eq:Imphizineq}), we have 
\[
\Im\phi_{z}(z,t)\le-\frac{1}{2}\Im\left(\frac{1}{z}\right).
\]
On the other hand, if 
\[
\Re(1-z^{2})>\frac{|z^{2}-1|}{4t}\Im\left(\frac{1}{z}\right)>0
\]
then 
\begin{align*}
\Im\phi_{z}(z,t) & <\frac{2(\ln|z|-y^{2})\Re(1-z^{2})}{(|z|^{2}-1)|z^{2}-1|}-\Im\left(\frac{1}{z}\right)\\
 & <f(z)-\frac{y^{2}}{4t(|z|^{2}-1)}\Im\left(\frac{1}{z}\right)\\
 & <-\frac{y^{2}}{4t(|z|^{2}-1)}\Im\left(\frac{1}{z}\right).
\end{align*}
\end{rem}

%%%%%%%%%%%%%%%%%%%%%%%%%%%%%%%%

\end{document}